\documentclass[preprint,12pt,authoryear]{elsarticle}
\usepackage{amscd,amssymb,stmaryrd}
\usepackage{amsmath}
\usepackage{amssymb}
\usepackage{graphicx}
\usepackage{subcaption}
\usepackage[utf8]{inputenc}
\usepackage[export]{adjustbox}
\usepackage{wrapfig}
\usepackage{amstext}
\usepackage{pstricks,pst-node,pst-plot,pst-coil}
\usepackage{amsthm}
\usepackage{amsmath}
\usepackage{color}
\usepackage{mathtools}
\usepackage{hyperref}
\usepackage[english]{babel}
\usepackage{booktabs}
\usepackage{mathrsfs}
\usepackage{comment}
\usepackage{float}
\usepackage[linesnumbered,ruled,vlined, ruled,vlined]{algorithm2e}
\usepackage{tikz}

\theoremstyle{definition}
\newtheorem{theorem}{Theorem}[section]

\newtheorem{lemma}[theorem]{Lemma}

\theoremstyle{remark}
\newtheorem*{remark}{Remark}

\def\eg{e.g., }
\def\ie{i.e., }
\def\etal{et al. }

\journal{Engineering Applications of Artificial Intelligence }
\begin{document}
\begin{frontmatter}
\title{Learning the Dynamical Response of Nonlinear Non-Autonomous Dynamical Systems with Deep Operator Learning Neural Networks}
\author{Guang Lin \fnref{label1,label2}}
\ead{guanglin@purdue.edu}
\author{Christian Moya \fnref{label2}}
\ead{cmoyacal@purdue.edu}
\author{Zecheng Zhang \fnref{label2}}
\ead{zecheng.zhang.math@gmail.com}

\affiliation[label1]{organization={Department of Mechanical Engineering, Purdue University},
            addressline={610 Purdue Mall},
            city={West Lafayette},
            postcode={47907},
            state={IN},
            country={USA}}
\affiliation[label2]{organization={Department of Mathematics, Purdue University},
            addressline={610 Purdue Mall},
            city={West Lafayette},
            postcode={47907},
            state={IN},
            country={USA}}
\begin{abstract}
We propose using operator learning to approximate the dynamical system response with control, such as nonlinear control systems. Unlike classical function learning, operator learning maps between two function spaces, does not require discretization of the output function, and provides flexibility in data preparation and solution prediction. Particularly, we apply and redesign the Deep Operator Neural Network (DeepONet) to recursively learn the solution trajectories of the dynamical systems. Our approach involves constructing and training a DeepONet that approximates the system's local solution operator. We then develop a numerical scheme that recursively simulates the system's long/medium-term dynamic response for given inputs and initial conditions, using the trained DeepONet. We accompany the proposed scheme with an estimate for the error bound of the associated cumulative error. Moreover, we propose a data-driven Runge-Kutta (RK) explicit scheme that leverages the DeepONet's forward pass and automatic differentiation to better approximate the system's response when the numerical scheme's step size is small. Numerical experiments on the predator-prey, pendulum, and cart pole systems demonstrate that our proposed DeepONet framework effectively learns to approximate the dynamical response of non-autonomous systems with time-dependent inputs.  
\end{abstract}
\begin{keyword}
Neural operator for dynamical systems, operator learning for dynamical systems and control, non-autonomous systems, control systems.
\end{keyword}
\end{frontmatter}
\section{Introduction} \label{sec:introduction}
High-fidelity numerical schemes are the primary computational tools for simulating and predicting complex dynamical systems, such as climate modeling, robotics, or the modern power grid. However, these schemes may be too expensive for control, optimization, and uncertainty quantification tasks, which often require a large number of forward simulations and consume significant computational resources. Therefore, there is a growing interest in developing tools that can accelerate the numerical simulation of complex dynamical systems without compromising accuracy.

By providing faster alternatives to traditional numerical schemes, machine learning-based computational tools hold the promise of accelerating the rate of innovation for complex dynamical systems. {Hence, a recent wave of machine and deep learning-based tools has demonstrated the potential of using observational data to construct fast surrogates of complex systems~\cite{efendiev2021hei, qin2021data} and providing efficient solutions to complex engineering tasks such as fault diagnosis~\cite{choudhary2023multi,mishra2022self,mishra2022fault} and power engineering~\cite{moya2023dae,moya2023deeponet}.} These tools aim to (i) learn the governing equations of a dynamical system or (ii) learn to predict the system's response from data.

On the one hand, several works~\cite{brunton2016discovering, brunton2016sparse, schaeffer2017learning, sun2020neupde} use observational data to discover the unknown governing equations of the underlying system. For example, Brunton \etal~\cite{brunton2016discovering} proposed identifying nonlinear systems using sparse schemes and a large set of dictionaries. They then extended their work in~\cite{brunton2016sparse} to identify input/output mappings that describe control systems. In~\cite{sun2020neupde}, the authors used sparse approximation schemes to recover the governing partial or ordinary differential equations of unknown systems.

On the other hand, there is growing interest in predicting the future response of a dynamical system using time-series data~\cite{qin2019data, raissi2018multistep, qin2021data, proctor2018generalizing}. For instance, the authors of~\cite{efendiev2021hei, chung2021multi} trained a transformer with data from the early stages of time-dependent partial differential equations to recursively predict the solution of future stages. Qin \etal~\cite{qin2019data} used a recurrent residual neural network~(ResNet) to approximate the mapping from the current state to the next state of an unknown autonomous system. Similarly, in~\cite{raissi2018multistep}, the authors used feed-forward neural networks~(FNN) as a building block of a multi-step scheme that predicts the response of an autonomous system. In~\cite{qin2021data}, the authors extended their previous work~\cite{qin2019data} to non-autonomous systems with time-dependent inputs. To this end, they parametrized the input locally within small time intervals.

Many of the works mentioned above require large amounts of training data to avoid overfitting. However, acquiring this data may be prohibitively expensive for complex dynamical systems. Additionally, if one fails to avoid overfitting when using traditional neural networks for next-state methods, the predicted response may drift and accumulate errors over time. It is therefore crucial to develop deep learning-based frameworks that can efficiently handle the infinite-dimensional nature of predicting the response of non-autonomous systems with time-dependent inputs for long time horizons, as studied in this paper.

\textit{Operator Learning}~\cite{chen1995universal, lu2019deeponet, li2020fourier, zhang2022belnet} was first addressed by the seminal work of \cite{chen1995universal}. Unlike classical function learning, operator learning involves approximating an operator between two function spaces. Recently, the seminal paper~\cite{lu2021learning} extended the work of \cite{chen1995universal} and proposed the Deep Operator Neural Network~(DeepONet) framework. DeepONet exhibits small generalization errors and learns with limited training data \cite{zhang2022belnet}. This has been demonstrated in many application areas, such as power engineering~\cite{moya2023deeponet}, multi-physics problems~\cite{cai2021deepm}, and turbulent combustion problems~\cite{ranade2021generalized}. Extensions to the original DeepONet~\cite{lu2021learning} have enabled the incorporation of physics-informed models~\cite{wang2021learning}, handling noisy data~\cite{lin2023b}, and designing novel optimization methods~\cite{lin2023b,lin2021multi}.

DeepONet is a prediction-free and domain-free tool \cite{zhang2022belnet}. Specifically, DeepONet can predict the target function value at any point in its domain, and the input and output functions are not necessarily required to share the same domain. This property allows for handling incomplete datasets during the training phase. In \cite{zhang2022belnet}, it was further relaxed the assumptions on input function discretization, improving flexibility in data preparation and prediction accuracy for operator learning.
 
In this paper, we focus on extending the This paper focuses on extending the original DeepONet framework to learn the solution operator of non-autonomous systems with time-dependent inputs for long-time horizons. By using such a data-driven operator framework, control policies can be designed for continuous nonlinear systems, among other applications.

In particular, one of our motivations behind learning to approximate the solution operator of a non-autonomous system is its application in control, and Model-Based Reinforcement Learning (MBRL)~\cite{wang2019benchmarking,sutton1990integrated}. In MBRL, one learns to approximate the system to control (from data) and then uses the learned model to seek an optimal policy without extensive interaction with the actual system. A common approach is to learn a discrete-time forward model that predicts the next state using the current state and selected control action. Such an MBRL framework has delivered successful and efficient results in discrete-time problems such as games~\cite{kaiser2019model}. However, most control systems in science and engineering (\eg robotics~\cite{nagabandi2020deep}, unmanned vehicles~\cite{hafner2019dream}, or laminar flows~\cite{fan2020reinforcement}) are continuous. Of course, one can always discretize the continuous dynamics and apply discrete-time MBRL using traditional neural networks (\eg see~\cite{brockman2016openai}). However, if one fails to handle the inherent epistemic uncertainty~\cite{deisenroth2013gaussian}, such a strategy may lead to error accumulation (due to model bias) and poor asymptotic performance. To alleviate these drawbacks, we build on the original DeepONet~\cite{lu2021learning} to design an effective and efficient framework that can learn the solution operator of a nonlinear non-autonomous system with time-dependent inputs, which one can then apply within the framework of control or continuous MBRL~\cite{du2020model}.

Formally, the objectives of this paper are twofold. 
\begin{enumerate}
    \item \textit{Approximation of the local solution operator:} Our goal is to create a neural operator-based framework that can learn to map (i) the current state of the dynamical system with control and (ii) a local approximation of the input to the next state of the non-autonomous dynamical system. 
    \item \textit{Long/Medium-term simulation:} We aim to design an efficient scheme that uses the operator learning framework to simulate the dynamical system's response to a given input over a long or medium-term horizon.
\end{enumerate}
The contributions that will achieve the above objectives are summarized below.
\begin{enumerate}
    \item We model the non-autonomous/control dynamical system in the deep operator learning framework and propose a DeepONet-based numerical scheme that effectively and recursively simulates the solution of the system.
    \item We also propose a novel data-driven Runge-Kutta (RK) DeepONet-based method that reduces the cumulative error and improves recursive prediction accuracy in long-term simulations. 
    \item We provide an estimation of the cumulative error for the proposed numerical scheme based on DeepONet. Our estimated error bounds are tighter than those presented in \cite{qin2021data}. Additionally, we demonstrate that the novel data-driven RK DeepONet-based scheme achieves better stability bounds than the DeepONet-based scheme.
    \item We test the proposed frameworks on state-of-the-art models, such as the predator-prey system, the pendulum, the cart-pole system, and a power engineering task. The effectiveness of the methods is observed in all experiments.
\end{enumerate}
We organize the rest of this paper as follows. In Section~\ref{sec:problem-formulation}, we review operator learning and introduce the proposed DeepONet-based algorithms. Next, we present the novel data-driven Runge-Kutta DeepONet-based scheme and estimate its corresponding error bound in Section~\ref{sec:data-driven-rk}. We present the numerical experiments in Section~\ref{sec:numerical-experiments}. We provide a discussion of our results and future work in Section~\ref{sec:discussion}, and conclude the paper in Section~\ref{sec:conclusion}.
\section{Problem Formulation} \label{sec:problem-formulation}
We consider the problem of learning from data the solution operator of the following continuous-time non-autonomous system with time-dependent inputs
\begin{gather} 
\begin{aligned} 
\frac{d}{dt}x(t) &= f(x(t), u(t)), \quad t \in [a,b] \\
x(a) &= x_0, \label{eq:control-system}
\end{aligned}
\end{gather}
where $x(t) \in \mathcal{X} \subseteq \mathbb{R}^n$ is the \textit{state} vector, $u(t) \in \mathcal{U} \subseteq \mathbb{R}^p$ the \textit{input} vector, and $f:\mathcal{X} \times \mathcal{U} \to \mathcal{X}$ an \textit{unknown} function. Additional assumptions on the input \textit{function}~u and the function~$f$ will be discussed later. Also, throughout this paper, we assume~$u(t)$ is a scalar input, \ie $p=1$. Extending the proposed framework to the vector-valued case is straightforward.

\textit{Solution Operator.} Let~$\mathcal{F}$ denote the solution operator of~\eqref{eq:control-system}. $\mathcal{F}$ takes as input the initial condition~$x(a) = x_0 \in \mathcal{X}$ and the sequence of input function values $u_{[a,t)}:=\{u(\tau) \in \mathcal{U}: \tau \in [a,t)\}$, and outputs the state~$x(t) \in \mathcal{X}$ at time~$t \in [a,b]$. We compute the solution operator via
\begin{gather} \label{eq:solution-operator}
\mathcal{F}\left(x_0, u_{[a,t)}\right)(t) \equiv x(t) = x_0 + \int_{a}^t f(x(s), u(s))ds.   
\end{gather}

\textit{Approximate System.} In practice, to learn the operator $\mathcal{F}$, we only have access to an approximate/discretized representation of the input function $u(t)$. Let~$u_m$ denote this approximate representation, which yields the following approximate system
\begin{gather}
\begin{aligned} \label{eq:approximate-system}
    \frac{d}{dt}\tilde{x}(t) &= f(\tilde{x}(t), u_m), \qquad t \in [a,b] \\
    \tilde{x}(a) &= x(a),
\end{aligned}    
\end{gather}
whose solution operator is
\begin{align} \label{eq:approximate-solution-operator}
    \mathcal{F}\left(x_0,u_m\right) (t) \equiv \tilde{x}(t) = x_0 + \int_a^t f(\tilde{x}(s), u_m)ds.
\end{align}
In the above, with a slight abuse of notation, we denoted $u_m$ as the input discretized using $m \ge 1$ sensors or interpolation points within the interval $[a,b]$, and the \textit{approximate} state function as $\tilde{x}(t)$.

\begin{remark}
In practice, the approximate system~\eqref{eq:approximate-system} with the solution operator~\eqref{eq:approximate-solution-operator} can represent, for example, sampled-data control systems~\cite{wittenmark2002computer} or semi-Markov Decision Processes~\cite{du2020model}. Therefore, the methods introduced in this paper can be used to design optimal control policies within the framework of model-based reinforcement learning~\cite{wang2019benchmarking}.
\end{remark}

\subsection{Learning the Solution Operator} \label{ssec:learning-operator}
To learn the solution operator $\mathcal{F}$, we use the \textit{Deep Operator Network}~(DeepONet) framework introduced in~\cite{lu2021learning}. In~\cite{lu2021learning}, the authors used a DeepONet~$G_\theta$ to learn a simplified version of the solution operator~$\mathcal{F}$~\eqref{eq:solution-operator} with $x(a) = 0$, \ie
$$
G(u)(t) \equiv x(t) = \int_{a}^t f(x(s), u(s))ds, \qquad t \in [a,b].
$$
The DeepONet~$G_\theta$ takes as inputs the (i) input $u(t)$ discretized using $m$ interpolation points (known as sensors in~\cite{lu2019deeponet}) and (ii) time~$t \in [a,b]$, and outputs the state~$x(t)$. Clearly, this DeepONet prediction is one-shot; it requires knowledge of the input in the whole interval~$[a,b]$. For small values of~$b$, the DeepONet's~$G_{\theta^*}$ prediction is very accurate. However, as we increase~$b$, the accuracy deteriorates. To improve accuracy, one can increase the number of interpolation points. This, however, makes DeepONet's training more challenging. 

To alleviate this drawback, we take a different approach in this paper. First, we train a DeepONet~$\mathcal{F}_\theta$, with the vector of trainable parameters~$\theta$, to learn the \textit{local} solution operator of the system. Then, we design a DeepONet-based numerical scheme that recursively uses the trained DeepONet~$\mathcal{F}_{\theta^*}$ to predict long/medium-term horizons, \ie for large values of $b$.

\textit{Learning the Local Solution Operator.} We let~$\mathcal{P}$ denote the possibly \textit{irregular} and \textit{arbitrary} time partition
$$
\mathcal{P}:~a=t_0 < t_1< \ldots < t_M=b,
$$
where $h_n:=t_{n+1} - t_n$ for all $n=0, 1,\ldots,M-1$ and let $h:= \max_{n} h_n$. Then, within the local interval~$[t_n,t_{n+1}] \equiv [t_n,t_n + h_n]$, the solution operator is given by
\begin{gather}
\begin{aligned} \label{eq:local-solution-operator}
    \mathcal{F}(\tilde{x}(t_n), u^n_m)(h_n) &\equiv \tilde{x}(t_n+h_n) \\ 
    &= \tilde{x}(t_n) + \int_{t_n}^{t_{n} + h_n} f(\tilde{x}(s), u^n_m)ds. 
\end{aligned}    
\end{gather}
In the above, with a slight abuse of notation, we use $u^n_m$ to denote the \textit{local} discretized representation of the input function~$u_m$, within the interval $[t_n, t_{n+1}]$, using $n_s \ge 1$ interpolation/sensor points or basis. 

\textit{The DeepONet $\mathcal{F}_\theta$.} Next, we design a Deep Operator Network (DeepONet) $\mathcal{F}_\theta$, with the vector of trainable parameters $\theta$, to approximate the local solution operator \eqref{eq:local-solution-operator}. Figure \ref{fig:deeponet} illustrates the proposed DeepONet~$\mathcal{F}_\theta$ that has two neural networks: the Branch Net and the Trunk Net.

\begin{figure}[t!]
\centering
\includegraphics[width=0.90\textwidth, height=6.5cm]{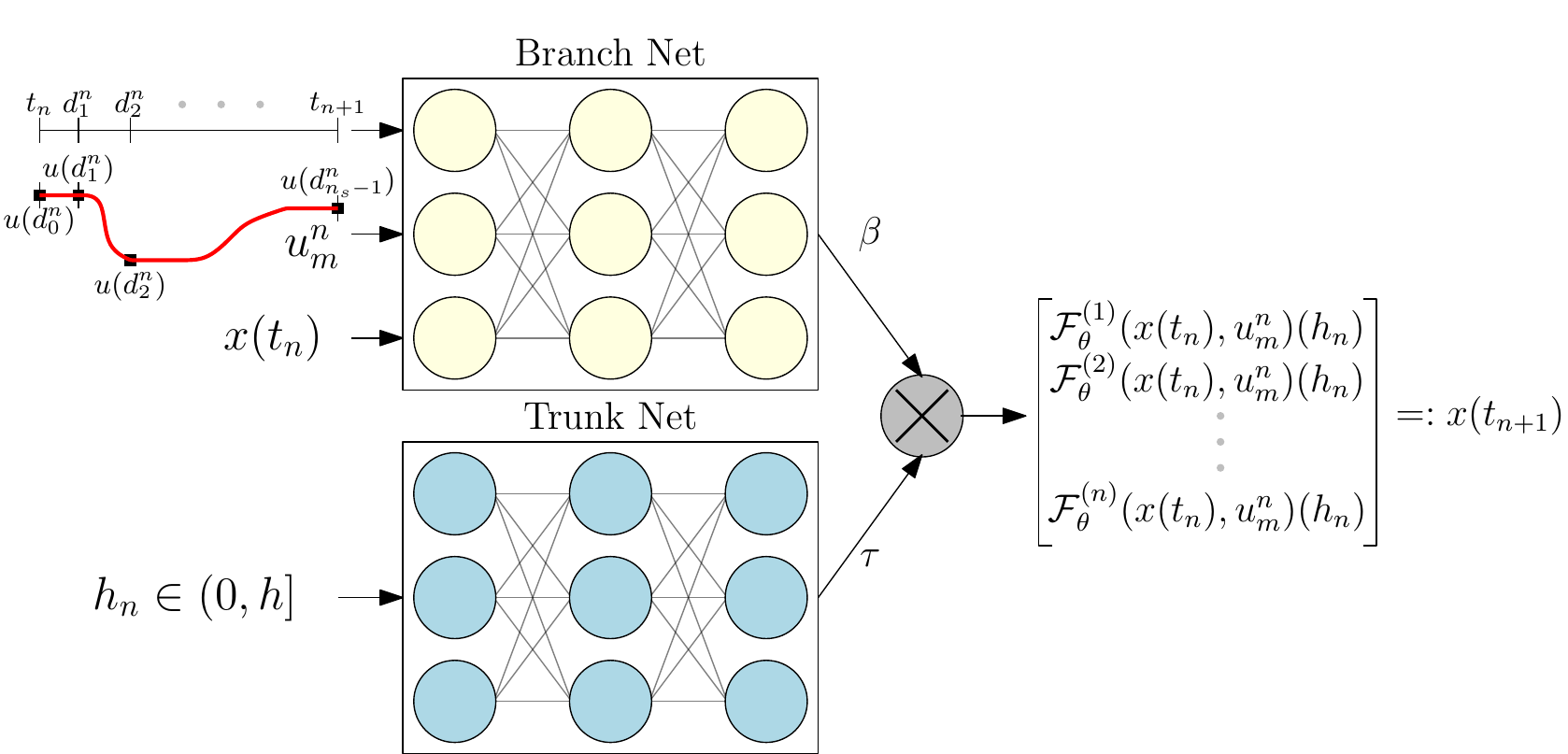}
\caption{The DeepONet architecture for learning the solution operator of non-autonomous system with time-dependent inputs. The Branch Net takes as input a vector resulting from concatenating (i) the current state~$\tilde{x}(t_n) \in \mathbb{R}^n$, (ii) the discretized local input~$u_m^n \in \mathbb{R}^{n_s}$, and (iii) the relative/flexible sensor locations $(t_n-d_0^n, \ldots, t_n-d_{{n_s}-1}^n)^\top \in \mathbb{R}^{n_s}$. Then, the Branch Net outputs the vector of features~$\beta \in \mathbb{R}^{nq}$. The Trunk Net takes as the input $h_n \in (0,h]$ and outputs the vector of features~$\tau \in \mathbb{R}^{nq}$. Finally, we obtain the DeepONet using the dot product between~$\beta$ and $\tau$.}
\label{fig:deeponet}
\end{figure}

The \textit{Branch Net} maps the vector that concatenates the (i) current state~$\tilde{x}(t_n)$ and (ii) input function~$u^n_m \in \mathbb{R}^{n_s}$, discretized using the mesh of \textit{local} sensors $t_n = d_0^n<d_1^n<...<d_{n_s-1}^n = t_{n+1}$, to the branch output feature vector~$\beta \in \mathbb{R}^{nq}$. Meanwhile, the \textit{Trunk Net} maps the scalar step size~$h_n \in (0,h]$ to the trunk output feature vector~$\tau \in \mathbb{R}^{nq}$. We compute the DeepONet's output for the $i$th component of the state vector~$\tilde{x}(t_{n+1})$ using the dot product:
$$\mathcal{F}^{(i)}_\theta(\tilde{x}(t_n),u^n_m)(h_n) = \sum_{k=1}^q \beta_{(i-1)q+k} \tau_{(i-1)q+k}.$$

\begin{remark}
\textit{Sensor locations.} One of the problems with the original DeepONet~\cite{lu2019deeponet} is that the sensor locations, $t_n = d_0^n<d_1^n<...<d_{n_s-1}^n = t_{n+1}$, are fixed. If we fix these sensor locations, then we cannot predict the response of the system using the arbitrary and irregular partition~$\mathcal{P}$. To enable prediction with~$\mathcal{P}$, we let the input to the branch~$u^n_m$ be the concatenation of the discretized input~$\left(u(d_0^n), \ldots, u(d_{{n_s}-1}^n)\right)$ with the corresponding (flexible) relative sensor locations~$(t_n-d_0^n, \ldots, t_n-d_{{n_s}-1}^n)$.
\end{remark}

\begin{remark}
\textit{The case of $n_s=1$ sensors, \ie the piece-wise constant approximation of~$u$.} If we let $n_s=1$ sensor, then the discretized input function within the interval $[t_n, t_{n+1}]$ corresponds to the singleton $u^n_m \equiv u(t_n)$. Such a case is the most challenging to learn because it introduces the largest error that propagates over time. However, it also represents one of the target applications: continuous model-based reinforcement learning with semi-Markov decision processes. We will show (in Section~\ref{sec:numerical-experiments}) that the proposed DeepONet can effectively handle having only $n_s=1$ sensor.
\end{remark}

\textit{Training the DeepONet}~$\mathcal{F}_\theta$. We train the proposed DeepONet~$\mathcal{F}_\theta$ model by minimizing the loss function 
$$\mathcal{L}(\theta;\mathcal{D}) = \frac{1}{N} \sum_{i=1}^N ||x^i(t_n +h_n^i) - \mathcal{F}_\theta(x^i(t_n), u^{i,n}_m)(h^i_{n}) ||_2^2$$
over of $N$ training data triplets 
$$ \mathcal{D} = \{(x^{i}(t_n), u^{i,n}_m), h_n^{i},x^{i}(t_n + h^{i}_n)\}_{i=1}^{N}$$
generated by the unknown ground truth local solution operator~$\mathcal{F}$.

\subsection{Predicting the System's Response} \label{ssec:deeponet-prediction}
We predict the response of the system~\eqref{eq:control-system} over a long/medium-term horizon (\ie within the interval $[a,b]$, with $b \gg 1$) using the DeepONet-based numerical scheme detailed in Algorithm~\ref{alg:prediction-scheme} {and illustrated in Figure~\ref{fig:recursive-DeepONet}.} Algorithm~\ref{alg:prediction-scheme} takes as inputs the (i) initial condition~$x(a) = x_0$, (ii) partition~$\mathcal{P}$, (iii) discretized representation of the input $u^n_m$, for $n=1,\ldots,M-1$, and (iv) trained DeepONet~$\mathcal{F}_{\theta{^*}}$. Then, Algorithm~\ref{alg:prediction-scheme} outputs the predicted response of the system over the partition~$\mathcal{P}$, \ie $\{\check{x}_n \equiv \check{x}(t_n):t_n \in \mathcal{P}\}$. Let us conclude this section by estimating a bound for the cumulative error of the proposed DeepONet-based numerical scheme described in Algorithm~\ref{alg:prediction-scheme}.
\begin{algorithm}[t]
\DontPrintSemicolon
\SetAlgoLined
\textbf{Require:} initial state vector~$x(a) = x_0$, partition~$\mathcal{P}$, input~$u^n_m$, for $n=1,\ldots,M-1$, and trained DeepONet~$\mathcal{F}_{\theta^*}$.\;
initialize $\check{x}(t_0) = x_0$\;
\For{$n = 0,\ldots,M-1$}{
  update the independent variable~$t_{n+1} = t_n + h_n$\;
  update the state vector using the DeepONet's forward pass
  $$\check{x}(t_{n+1}) = \mathcal{F}_{\theta^*}(\check{x}(t_n), u^n_m)(h_n).$$
  \vspace{-1.5em}\;}
  \textbf{Return:} predicted response~$\{\check{x}_n \equiv \check{x}(t_n):t_n \in \mathcal{P}\}$.\;
 \caption{DeepONet-based Numerical Scheme}
 \label{alg:prediction-scheme}
\end{algorithm}
\tikzset{every picture/.style={line width=0.75pt}} 
\begin{figure}[t!]
\centering
\begin{tikzpicture}[x=0.75pt,y=0.75pt,yscale=-1,xscale=1]
\draw  [fill={rgb, 255:red, 184; green, 233; blue, 134 }  ,fill opacity=1 ] (180,96) -- (351,96) -- (351,144) -- (180,144) -- cycle ;
\draw    (353,120) -- (385,120) ;
\draw [shift={(387,120)}, rotate = 180] [color={rgb, 255:red, 0; green, 0; blue, 0 }  ][line width=0.75]    (10.93,-3.29) .. controls (6.95,-1.4) and (3.31,-0.3) .. (0,0) .. controls (3.31,0.3) and (6.95,1.4) .. (10.93,3.29)   ;
\draw  [fill={rgb, 255:red, 241; green, 182; blue, 188 }  ,fill opacity=1 ] (387,120) .. controls (387,106.19) and (404.91,95) .. (427,95) .. controls (449.09,95) and (467,106.19) .. (467,120) .. controls (467,133.81) and (449.09,145) .. (427,145) .. controls (404.91,145) and (387,133.81) .. (387,120) -- cycle ;
\draw  [fill={rgb, 255:red, 241; green, 182; blue, 188 }  ,fill opacity=1 ] (21,59) .. controls (21,45.19) and (38.91,34) .. (61,34) .. controls (83.09,34) and (101,45.19) .. (101,59) .. controls (101,72.81) and (83.09,84) .. (61,84) .. controls (38.91,84) and (21,72.81) .. (21,59) -- cycle ;
\draw    (61,10) -- (61,32) ;
\draw [shift={(61,34)}, rotate = 270] [color={rgb, 255:red, 0; green, 0; blue, 0 }  ][line width=0.75]    (10.93,-3.29) .. controls (6.95,-1.4) and (3.31,-0.3) .. (0,0) .. controls (3.31,0.3) and (6.95,1.4) .. (10.93,3.29)   ;
\draw    (61,10) -- (427,10) ;
\draw    (427,10) -- (427,95) ;
\draw  [fill={rgb, 255:red, 241; green, 182; blue, 188 }  ,fill opacity=1 ] (21,121) .. controls (21,107.19) and (38.91,96) .. (61,96) .. controls (83.09,96) and (101,107.19) .. (101,121) .. controls (101,134.81) and (83.09,146) .. (61,146) .. controls (38.91,146) and (21,134.81) .. (21,121) -- cycle ;
\draw  [fill={rgb, 255:red, 241; green, 182; blue, 188 }  ,fill opacity=1 ] (21,184) .. controls (21,170.19) and (38.91,159) .. (61,159) .. controls (83.09,159) and (101,170.19) .. (101,184) .. controls (101,197.81) and (83.09,209) .. (61,209) .. controls (38.91,209) and (21,197.81) .. (21,184) -- cycle ;
\draw    (140,59) -- (141,184) ;
\draw    (101,59) -- (140,59) ;
\draw    (101,121) -- (140,121) ;
\draw    (140,121) -- (176,121) ;
\draw [shift={(178,121)}, rotate = 180] [color={rgb, 255:red, 0; green, 0; blue, 0 }  ][line width=0.75]    (10.93,-3.29) .. controls (6.95,-1.4) and (3.31,-0.3) .. (0,0) .. controls (3.31,0.3) and (6.95,1.4) .. (10.93,3.29)   ;
\draw    (101,184) -- (141,184) ;
\draw (169,67) node [anchor=north west][inner sep=0.75pt]   [align=left] {\textbf{Trained recursive DeepONet}};
\draw (202,108) node [anchor=north west][inner sep=0.75pt]    {$\mathcal{F}_{\theta ^{*}}\left(\check{x}( t_{n}) ,u_{m}^{n}\right)( h_{n})$};
\draw (392,109) node [anchor=north west][inner sep=0.75pt]    {$\check{x}(t_{n} +h_{n})$};
\draw (42,48) node [anchor=north west][inner sep=0.75pt]    {$\check{x}(t_{n})$};
\draw (48,110) node [anchor=north west][inner sep=0.75pt]    {$u_{m}^{n}$};
\draw (51,175) node [anchor=north west][inner sep=0.75pt]    {$h_{n}$};
\draw (38,219) node [anchor=north west][inner sep=0.75pt]   [align=left] {Inputs};
\draw (381,153) node [anchor=north west][inner sep=0.75pt]   [align=left] {Predicted state};
\end{tikzpicture}
\caption{{The trained DeepONet $\mathcal{F}_{\theta^*}$ recursively predicts the next state of a non-autonomous system $\check{x}(t_n + h_n)$ given the current state $\check{x}(t_n)$, the local approximation of the input $u_m^n$, and the step-size $h_n$.}}
\label{fig:recursive-DeepONet}
\end{figure}
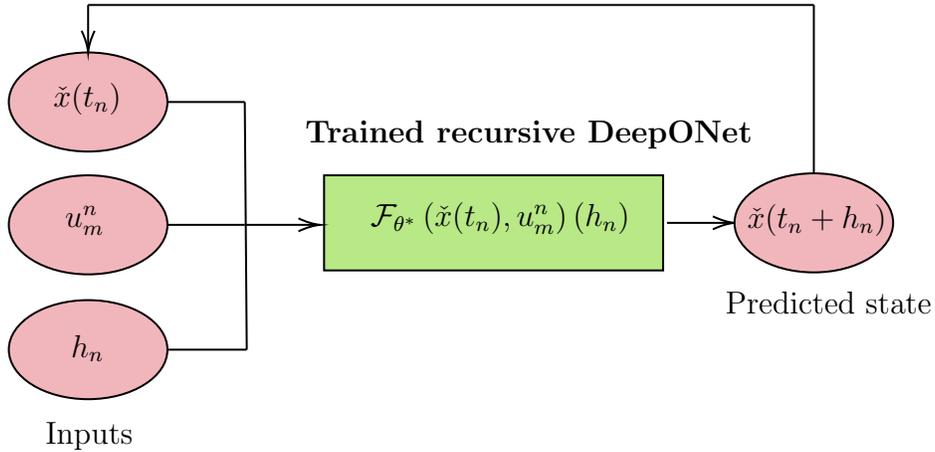
\subsection{Error Bound for the DeepONet-based Numerical  Scheme} \label{ssec:error-bound}
\textit{Assumptions.} We let the input \textit{function}~$u \in V \subset C[a,b]$ where $V$ is compact. We also assume the \textit{unknown} vector field~$f:\mathcal{X} \times \mathcal{U} \to \mathcal{X}$ is Lipschitz in $x$ and $u$, \ie 
\begin{align*}
    & \|f(x_1, u) - f(x_2, u)\|\leq C_1\|x_1 - x_2\|,\\
    &  \|f(x, u_1) - f(x, u_2)\|\leq C_1\|u_1 - u_2\|,
\end{align*}
where $C_1 > 0$ is a constant, and $x_1, x_2, u_1, u_2$ are in some proper space. Such an assumption is common in engineering, as $f$ is often differentiable with respect to $x$ and $u$.

The following lemma, presented in~\cite{qin2021data}, provides us with an alternative form of the local solution operator~\eqref{eq:approximate-solution-operator}. Such a form will be used later when estimating the error bound.
\begin{lemma} \label{lemma:functional-form}
Consider the local solution operator~$\mathcal{F}(\tilde{x}(t_n), u_m^n)(h_n)$. Then, there exists a function $\Phi: \mathbb{R}^n \times \mathbb{R}^{n_s} \times \mathbb{R} \to \mathbb{R}^n$, which depends on $f$, such that
\begin{align}
    \tilde{x}(t_{n+1}) = \mathcal{F}(\tilde{x}(t_n), u_m^n)(h_n) = \Phi(\tilde{x}(t_n), u_m^n, h_n),
    \label{eq_tilde}
\end{align}
for any $t_n \in \mathcal{P}$. In the above, $u^n_m \in \mathbb{R}^{n_s}$ locally characterizes~$u_m$ on the mesh $t_n=d_0^n < d_1^n < \ldots < d_{n_s-1}^n = t_{n+1}$.
\end{lemma}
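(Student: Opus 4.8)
The plan is to show that, once the input is replaced by its finite-dimensional local representation, the local initial value problem becomes autonomous after a time shift, so that its time-$h_n$ flow map depends only on the data $(\tilde{x}(t_n), u_m^n, h_n)$ and not on the index $n$ or the absolute base time $t_n$. First I would reparametrize the local interval by setting $\sigma := s - t_n \in [0,h_n]$ and writing $y(\sigma) := \tilde{x}(t_n + \sigma)$. Because the branch consumes the input through the \emph{relative} sensor locations $(t_n - d_0^n, \dots, t_n - d_{n_s-1}^n)$ together with the sampled values, the local input interpolation may be expressed as a single map $\hat{u}(\sigma; u_m^n)$ that depends on $\sigma$ and on the finite-dimensional parameter $u_m^n \in \mathbb{R}^{n_s}$ alone. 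Substituting into \eqref{eq:local-solution-operator} turns the local dynamics into
$$\frac{d}{d\sigma} y(\sigma) = f\!\left(y(\sigma), \hat{u}(\sigma; u_m^n)\right), \qquad y(0) = \tilde{x}(t_n),$$
whose right-hand side $g(\sigma, y; u_m^n) := f(y, \hat{u}(\sigma; u_m^n))$ carries no explicit dependence on $t_n$ or $n$.

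Next I would invoke the Lipschitz hypothesis on $f$. Since $f$ is Lipschitz in both arguments and $\hat{u}(\cdot\,; u_m^n)$ is (for the standard piecewise-constant or polynomial interpolants) bounded and measurable in $\sigma$, the composite field $g(\cdot\,,\cdot\,; u_m^n)$ satisfies the hypotheses of the Picard--Lindel\"of/Carath\'eodory existence--uniqueness theorem. Hence, for each fixed parameter $u_m^n$ and each admissible step $h_n \in (0,h]$, the initial value problem above has a unique solution on $[0,h_n]$, and its terminal value $y(h_n)$ is a well-defined function of the triple $(y(0), u_m^n, h_n)$. I would then simply \emph{define}
$$\Phi(\xi, v, \eta) := y_\xi^v(\eta),$$
the value at time $\eta$ of the solution started at $y(0) = \xi$ with input parameter $v$; by construction $\Phi$ maps $\mathbb{R}^n \times \mathbb{R}^{n_s} \times \mathbb{R} \to \mathbb{R}^n$ and depends on $f$ through the vector field $g$. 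Evaluating at $(\xi, v, \eta) = (\tilde{x}(t_n), u_m^n, h_n)$ recovers exactly $\mathcal{F}(\tilde{x}(t_n), u_m^n)(h_n) = \tilde{x}(t_{n+1})$, which is the claim.

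The step I expect to be the main obstacle is justifying that $\Phi$ is a single map valid for \emph{every} $t_n \in \mathcal{P}$, rather than a family $\Phi_n$ indexed by the interval. This is precisely the time-translation invariance made possible by using relative sensor locations: because $g(\sigma, y; u_m^n)$ depends on absolute time only through the parameter $u_m^n$, two local problems sharing the same $(\xi, v, \eta)$ produce the same terminal state regardless of where the interval sits in $[a,b]$. I would make this explicit by noting that a time shift $s \mapsto s + c$ leaves the reparametrized field $g$ invariant, so uniqueness of solutions forces the two flow maps to coincide. A secondary technical point worth recording is the regularity of $\hat{u}(\cdot\,; u_m^n)$ at the interpolation nodes (\eg the jumps of a piecewise-constant reconstruction), which is absorbed by the Carath\'eodory version of the existence theorem and does not affect the well-definedness of the terminal map $\Phi$.
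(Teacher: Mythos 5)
Your proof is correct, but there is nothing in the paper to compare it against: the paper does not prove this lemma at all, importing it verbatim from the cited reference \cite{qin2021data}. Your argument --- shifting to relative time $\sigma = s - t_n$, expressing the reconstructed local input as a map $\hat{u}(\sigma; u_m^n)$ of relative time and the finite-dimensional parameter alone, invoking Picard--Lindel\"of/Carath\'eodory well-posedness under the paper's standing Lipschitz assumption on $f$, and defining $\Phi$ as the resulting time-$\eta$ flow map --- is precisely the standard justification and is essentially the argument given in that cited source, so you have supplied a proof where the paper has none. Your emphasis on time-translation invariance (a single $\Phi$ rather than a family $\Phi_n$) correctly identifies the one point that actually needs checking. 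One subtlety worth recording, which is a defect of the lemma's statement rather than of your proof: as written, $u_m^n \in \mathbb{R}^{n_s}$ contains only the sampled values, whereas the reconstruction $\hat{u}(\cdot\,; u_m^n)$ also requires the relative sensor locations $t_n - d_j^n$. For $\Phi$ to be a function of $(\xi, v, \eta)$ alone, the relative mesh must be determined by $h_n$ (e.g., nodes scaled uniformly with the step), or else the locations must be folded into the parameter vector --- which is exactly what the paper's remark on flexible sensors does for the branch input, at the cost of the parameter then living in $\mathbb{R}^{2 n_s}$ rather than $\mathbb{R}^{n_s}$. Your proposal implicitly adopts the latter convention, which is the right reading, and handles the node-regularity issue (jumps of a piecewise-constant reconstruction) correctly via the Carath\'eodory formulation.
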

We now provide the error estimation of the proposed DeepONet prediction scheme detailed in Algorithm~\ref{alg:prediction-scheme}. For the ease of notation, we denote $x_n = x(t_n)$ and $\check{x}_n = \check{x}(t_n)$ for all $t_n \in \mathcal{P}$.
\begin{lemma}
For any $t_{n}\in\mathcal{P}$, we have
\begin{align}
    \|x_n - \tilde{x}_n\|\leq \frac{1- \Bar{C}^n}{1-\Bar{C}}\Bar{e}(u_m):= \Tilde{C}\Bar{e}_m,
    \label{tilde_error}
\end{align}
where $\Bar{C} = e^{C_1 h}$, $\Bar{e}(u_m) = \max_{n}\Bar{e}_n(u_m)$, and  $\Bar{e}_n(u_m) = C_1 h_n \kappa_n(m) e^{C_1 h_n}$.
\end{lemma}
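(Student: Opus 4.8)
The plan is to compare the two integral representations---the exact solution operator~\eqref{eq:solution-operator} and the approximate solution operator~\eqref{eq:approximate-solution-operator}---on a single subinterval of the partition~$\mathcal{P}$, derive a one-step error recursion, and then chain that recursion over $n$ steps using the geometric-sum identity $\sum_{k=0}^{n-1}\bar{C}^k = (1-\bar{C}^n)/(1-\bar{C})$. First I would fix a subinterval $[t_n,t_{n+1}]$ and subtract the integral form of $\tilde{x}$ from that of $x$. Writing $g(t):=\|x(t)-\tilde{x}(t)\|$, this yields, for $t\in[t_n,t_{n+1}]$,
$$g(t) \leq g(t_n) + \int_{t_n}^{t}\|f(x(s),u(s)) - f(\tilde{x}(s),u_m^n)\|\,ds.$$

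Next I would split the integrand by inserting and subtracting $f(\tilde{x}(s),u(s))$ and applying the two Lipschitz hypotheses: the first difference is bounded by $C_1\|x(s)-\tilde{x}(s)\| = C_1 g(s)$, while the second is bounded by $C_1\|u(s)-u_m^n\| \leq C_1\kappa_n(m)$, where $\kappa_n(m):=\sup_{s\in[t_n,t_{n+1}]}\|u(s)-u_m^n\|$ is the local input-discretization error carried by the $n_s$ sensors. This produces the integral inequality $g(t) \leq g(t_n) + C_1\kappa_n(m)(t-t_n) + C_1\int_{t_n}^{t} g(s)\,ds$. The key step is then to invoke Gr\"onwall's inequality on the subinterval: since the non-integral part $g(t_n)+C_1\kappa_n(m)(t-t_n)$ is nondecreasing in $t$, Gr\"onwall gives the sharp one-step bound
$$g(t_{n+1}) \leq \bigl[g(t_n) + C_1\kappa_n(m)h_n\bigr]e^{C_1 h_n}.$$

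Denoting $\delta_n := g(t_n) = \|x_n-\tilde{x}_n\|$ and recognizing the local error term $\bar{e}_n(u_m) = C_1 h_n\kappa_n(m)e^{C_1 h_n}$, then bounding $e^{C_1 h_n}\leq e^{C_1 h}=\bar{C}$ on the propagation coefficient and $\bar{e}_n(u_m)\leq\bar{e}(u_m)$ on the inhomogeneous term, I obtain the linear recursion $\delta_{n+1}\leq \bar{C}\,\delta_n + \bar{e}(u_m)$. Finally, starting from $\delta_0=0$ (since $\tilde{x}(a)=x(a)$), I would unroll the recursion to reach
$$\delta_n \leq \bar{e}(u_m)\sum_{k=0}^{n-1}\bar{C}^k = \frac{1-\bar{C}^n}{1-\bar{C}}\,\bar{e}(u_m),$$
which is exactly the claimed estimate~\eqref{tilde_error} with $\tilde{C}=(1-\bar{C}^n)/(1-\bar{C})$ and $\bar{e}_m=\bar{e}(u_m)$.

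I expect the main obstacle to be the correct handling of the Gr\"onwall step rather than the algebra of the geometric sum. Two points need care: one must verify that the inhomogeneous term is monotone in $t$ so that the clean exponential factor $e^{C_1 h_n}$ emerges (instead of a cruder constant that would loosen the bound relative to~\cite{qin2021data}), and one must carry the \emph{global} accumulated error $\delta_n$---not zero---as the initial datum on each subinterval, since it is precisely this that makes the per-step errors compound geometrically through the factor $\bar{C}$ rather than merely adding up.
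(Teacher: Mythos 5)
Your proposal is correct and follows essentially the same route as the paper's proof: subtract the two integral representations on a subinterval, split the integrand via the Lipschitz conditions in $x$ and $u$, apply Gr\"onwall to obtain the one-step recursion $\|x_{n+1}-\tilde{x}_{n+1}\| \leq e^{C_1 h_n}\|x_n-\tilde{x}_n\| + \Bar{e}_n(u_m)$, and unroll from $x_0 = \tilde{x}_0$ via the geometric sum. Your explicit bounding of the propagation factor $e^{C_1 h_n} \leq \Bar{C}$ and your remark about carrying the accumulated error $\delta_n$ as the initial datum on each subinterval merely make explicit what the paper leaves implicit in its final sentence.
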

\begin{proof}
Let~$a = t_n$ and $s\in[t_n, t_{n+1}]$ in the solution operators~\eqref{eq:solution-operator} and~\eqref{eq:approximate-solution-operator}. Then, subtracting~\eqref{eq:approximate-solution-operator} from~\eqref{eq:solution-operator} gives 
\begin{align*}
    \|x(s) - \tilde{x}(s)\|& \leq \|x_n-\tilde{x}_n\| + \int_{t_n}^{s}\|f(x(t), u(t))-f(\tilde{x}(t), u_m)\| dt\\
    &\leq \|x_n-\tilde{x}_n\|  + C_1\int_{t_n}^{s}\|u(t)-u_m\| dt  +C_1\int_{t_n}^{s}\|x(t)-\tilde{x}(t)\|dt \\
    &\leq \|x_n-\tilde{x}_n\| + C_1 h_n \kappa_n(m) + C_1\int_{t_n}^{s}\|x(t)-\tilde{x}(t)\| dt,
\end{align*}
where $\kappa_n(m)$ is the local approximation error of the input within the interval $[t_n, t_{n+1}]$:
\begin{align*}
    \max_{s\in[t_n, t_{n+1}]}|u(s) - u_m|\leq \kappa_n(m),  
\end{align*}
such that
$$\kappa_n(m)\rightarrow 0 \text{ when the number of sensors}~m \to \infty.$$
We refer the interested reader to equation~(4) of~\cite{lu2019deeponet} for details of such an approximation for the input. Set $s = t_{n+1}$ and apply Gronwall's inequality, we then have,
\begin{align}
    \|x_{n+1} - \tilde{x}_{n+1}\|& \leq \|x_n-\tilde{x}_n\|e^{C_1 h_n} + \underbrace{C_1 h_n \kappa_n(m) e^{C_1 h_n}}_{\Bar{e}_{n}(u_m)}.
    \label{tilde_gronwall}
\end{align}
Taking~$\Bar{e}(u_m) = \max_n \Bar{e}_n(u_m)$ gives
\begin{align*}
    \| x_{n+1} - \tilde{x}_{n+1}\| \leq \| x_n - \tilde{x}_n \| e^{C_1 h_n} + \Bar{e}(u_m).
\end{align*}
The bound then follows immediately due to $ x(t_0) = \tilde{x}_0$.
\end{proof}

Before we estimate the cumulative error of the DeepONet-assisted solution $\check{x}_n$, we review the universal approximation theorem of neural networks for high-dimensional functions~\cite{cybenko1989approximation}. To this end, given $h_n$, we define the following vector-valued continuous function~$\varphi:\mathbb{R}^n \times \mathbb{R}^{n_s} \to \mathbb{R}^{n}$
\begin{align*}
    \varphi(y_n, u_m^n) = \mathcal{F}(y_n, u_m^n)(h_n) = \Phi(y_n, u_m^n,h_n),
\end{align*}
where $y_n\in\mathbb{R}^n$. Then, by the universal approximation theorem, for $\Bar{e}(u_m)>0$, there exist $W_1\in\mathbb{R}^{K \times (n+n_s)}$, $b_1\in\mathbb{R}^{K}$, $W_2\in\mathbb{R}^{n \times K}$ and $b_2\in\mathbb{R}^{n}$ such that
\begin{align}
    \bigg \|\varphi(y_n,u_m^n) - \left(W_2 \sigma (W_1 [y_n,u_m^n]^\top + b_1)+b_2\right) \bigg\| <\Bar{e}(u_m).
    \label{universal_approximation}
\end{align}
Here, the two-layer network represents the DeepONet for a given~$h_n$, \ie 
$$\left(W_2 \sigma (W_1 [y_n,u_m^n]^\top + b_1)+b_2\right) \equiv \mathcal{F}_{\theta^*}(y_n, u_m^n).$$

The following lemma estimates the cumulative error between the DeepONet-assisted solution $\check{x}$ (obtained via Algorithm \ref{alg:prediction-scheme}) and the solution $\tilde{x}$ of the approximate system that satisfies \eqref{eq_tilde}.
\begin{lemma}
Assume $\Phi$ is Lipschitz in $x$ with Lipschitz constant $C_2$. Suppose the DeepONet is well trained so that the network satisfies \eqref{universal_approximation}. Then, we have the following estimate:
\begin{align}
    \|\check{x}_{n} - \tilde{x}_n\| \leq \check{C} \Bar{e}(u_m),
    \label{check_error}
\end{align}
where $\check{C} = \frac{1 - C_2^n}{1-C_2}$.
\end{lemma}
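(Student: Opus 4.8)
The plan is to establish a one-step linear recursion for the error $\|\check{x}_n - \tilde{x}_n\|$ and then unroll it, mirroring the Gronwall-type argument of the previous lemma but replacing the continuous-time integral estimate with the discrete DeepONet approximation guarantee~\eqref{universal_approximation}. Both sequences evolve by a single update rule: $\tilde{x}_{n+1} = \Phi(\tilde{x}_n, u_m^n, h_n)$ by the functional form of Lemma~\ref{lemma:functional-form}, whereas $\check{x}_{n+1} = \mathcal{F}_{\theta^*}(\check{x}_n, u_m^n)(h_n)$ by the definition of Algorithm~\ref{alg:prediction-scheme}. Crucially, the two schemes share the same initial condition $\check{x}_0 = \tilde{x}_0 = x_0$, so the error vanishes at $n=0$ and is generated purely by the propagation of the network approximation error.

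First I would fix a step index $n$ and insert the intermediate term $\Phi(\check{x}_n, u_m^n, h_n)$, the \emph{exact} one-step map applied to the DeepONet's own current state. This produces the decomposition
\begin{align*}
\check{x}_{n+1} - \tilde{x}_{n+1} &= \Big(\mathcal{F}_{\theta^*}(\check{x}_n, u_m^n)(h_n) - \Phi(\check{x}_n, u_m^n, h_n)\Big) \\
&\quad + \Big(\Phi(\check{x}_n, u_m^n, h_n) - \Phi(\tilde{x}_n, u_m^n, h_n)\Big).
\end{align*}
The triangle inequality then splits the error into a \emph{consistency} term and a \emph{stability} term. The consistency term is bounded by $\Bar{e}(u_m)$ directly from~\eqref{universal_approximation}, evaluated at $y_n = \check{x}_n$, since by definition $\varphi(\check{x}_n, u_m^n) = \Phi(\check{x}_n, u_m^n, h_n)$. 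The stability term is bounded by $C_2 \|\check{x}_n - \tilde{x}_n\|$ using the hypothesis that $\Phi$ is Lipschitz in its state argument with constant $C_2$; note that both evaluations of $\Phi$ share the identical input $u_m^n$ and step $h_n$, so only the state slot differs.

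Combining these bounds yields the recursion $\|\check{x}_{n+1} - \tilde{x}_{n+1}\| \leq C_2 \|\check{x}_n - \tilde{x}_n\| + \Bar{e}(u_m)$, and unrolling from the zero initial error gives the geometric sum $\Bar{e}(u_m)\sum_{k=0}^{n-1} C_2^k = \frac{1-C_2^n}{1-C_2}\Bar{e}(u_m)$, which is exactly the claimed bound with $\check{C} = \frac{1-C_2^n}{1-C_2}$. I expect the only subtle point to be the correct invocation of~\eqref{universal_approximation} at the \emph{running} state $\check{x}_n$ rather than at a fixed argument: the estimate must hold uniformly over the states visited by the trajectory, which is justified because the universal approximation theorem supplies the bound on a compact domain containing the iterates, with the step size $h_n$ held fixed inside the definition of $\varphi$. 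The remainder is a routine telescoping of the recursion.
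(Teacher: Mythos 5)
Your proof is correct and follows essentially the same route as the paper's: you insert the same intermediate term $\Phi(\check{x}_n,u_m^n,h_n)=\varphi(\check{x}_n,u_m^n)$, split the error into the same consistency and stability pieces bounded via \eqref{universal_approximation} and the Lipschitz hypothesis on $\Phi$, and unroll the resulting recursion $\|\check{x}_{n+1}-\tilde{x}_{n+1}\|\leq C_2\|\check{x}_n-\tilde{x}_n\|+\Bar{e}(u_m)$ from the zero initial error. If anything, your write-up is slightly more careful than the paper's, which typographically presents the triangle-inequality step as an equality and leaves the geometric-sum unrolling implicit.
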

\begin{proof}
It follows from the universal approximation theorem of neural networks \eqref{universal_approximation} and $\Phi$ being Lipschitz that, 
\begin{align*}
    \|\check{x}_{n+1} - \tilde{x}_{n+1} )\| & = \|\mathcal{F}_{\theta^*} (\check{x}_n,u_m^n) - \Phi(\tilde{x}_n, u_m^n, h_n) \|\\
    & = \|\mathcal{F}_{\theta^*} (\check{x}_n,u_m^n) -  \varphi(\check{x}_n, u_m^n)  \| \\ &\qquad + \| \Phi(\check{x}_n,u_m^n,h_n ) -  \Phi(\tilde{x}_n,u_m^n,h_n )\|\\
    & \leq \Bar{e}(u_m)  + C_2\| \check{x}_{n} - \tilde{x}_{n}  \|.
\end{align*}
The result follows immediately from $\tilde{x}_0 = \check{x}_0$.
\end{proof}
The following theorem summarizes the error of the proposed DeepONet scheme.
\begin{theorem}
For any $t_n\in\mathcal{P}$, we have
\begin{align}
    \|x_n - \check{x}_n \|\leq \tilde{C}\Bar{e}(u_m) + \check{C}\Bar{e}(u_m),
\end{align}
where $\Tilde{C}$, $\check{C}$ and $\Bar{e}(u_m)$ are, respectively,  the constants defined in \eqref{tilde_error}, \eqref{check_error}, and \eqref{tilde_error}.
\end{theorem}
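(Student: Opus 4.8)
The plan is to decompose the total error into the two contributions that the preceding two lemmas already control, and then add them. First I would insert the solution $\tilde{x}_n$ of the approximate system into the difference $x_n - \check{x}_n$ and apply the triangle inequality,
\begin{align*}
    \norm{x_n - \check{x}_n} = \norm{(x_n - \tilde{x}_n) + (\tilde{x}_n - \check{x}_n)} \leq \norm{x_n - \tilde{x}_n} + \norm{\tilde{x}_n - \check{x}_n}.
\end{align*}
Here the first summand is the error between the true system~\eqref{eq:solution-operator} and the approximate (input-discretized) system~\eqref{eq:approximate-solution-operator}, while the second is the cumulative error between the approximate system and its recursively applied DeepONet surrogate produced by Algorithm~\ref{alg:prediction-scheme}.

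Next I would bound each summand by the corresponding lemma. The first summand is controlled by the estimate~\eqref{tilde_error}, giving $\norm{x_n - \tilde{x}_n} \leq \tilde{C}\,\Bar{e}(u_m)$ with $\tilde{C} = (1 - \Bar{C}^n)/(1 - \Bar{C})$ and $\Bar{C} = e^{C_1 h}$. The second summand is controlled by the estimate~\eqref{check_error}, giving $\norm{\tilde{x}_n - \check{x}_n} \leq \check{C}\,\Bar{e}(u_m)$ with $\check{C} = (1 - C_2^n)/(1 - C_2)$. Adding these two inequalities yields exactly $\norm{x_n - \check{x}_n} \leq \tilde{C}\Bar{e}(u_m) + \check{C}\Bar{e}(u_m)$, which is the claimed bound, and the argument holds uniformly for every $t_n \in \mathcal{P}$.

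There is essentially no analytical obstacle at this final step: all the heavy lifting---the Gronwall argument behind~\eqref{tilde_error} and the recursive unrolling of the universal-approximation bound~\eqref{universal_approximation} behind~\eqref{check_error}---has already been carried out in the two lemmas, and the theorem merely assembles them through the triangle inequality. The only point I would check carefully is the bookkeeping: both lemmas must be evaluated at the same index $n$ and must share the single global error scale $\Bar{e}(u_m) = \max_n \Bar{e}_n(u_m)$, so that the two geometric-sum constants $\tilde{C}$ and $\check{C}$ can legitimately be collected against a common factor. Since $\Bar{e}(u_m)$ is defined uniformly over the partition and enters both lemmas identically, this is consistent, and the final estimate degrades at most geometrically in $n$ through $\tilde{C}$ and $\check{C}$.
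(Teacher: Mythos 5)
Your proof is correct and follows exactly the route the paper intends: the theorem is just the triangle inequality applied to the decomposition $x_n - \check{x}_n = (x_n - \tilde{x}_n) + (\tilde{x}_n - \check{x}_n)$, with the two preceding lemmas, \eqref{tilde_error} and \eqref{check_error}, bounding each term against the common scale $\Bar{e}(u_m)$. The paper states the theorem without writing out this assembly step, and your bookkeeping check (same index $n$, same uniform $\Bar{e}(u_m)$ in both lemmas) is exactly the right thing to verify.
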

We conclude this section by observing that the error bound found in this section is tighter than the error found in~\cite{qin2021data}, which behaves like $te^{ct}$ where $c$ is a positive constant.
\section{Data-Driven Runge-Kutta DeepONet Prediction Scheme}
\label{sec:data-driven-rk}
In this section, we propose a \textit{data-driven} Runge-Kutta explicit DeepONet-based scheme~\cite{iserles2009first} that predicts the new state vector~$\hat{x}(t_n + h_n)$ using the current state value~$\hat{x}(t_n)$, \ie
$$\hat{x}(t_n + h_n) = \hat{x}(t_n) + \frac{h_n}{2} (k_1 + k_2) .$$
Here $k_1$ and $k_2$ are, respectively, the estimates of~$f$ at $t_n$ and $t_{n+1}$. We compute these estimates (see equation~\eqref{eq:estimates}) using (i) the forward pass of trained DeepONet~$\mathcal{F}_{\theta^*}$ and (ii) automatic differentiation. Note that in~\eqref{eq:future-estimate}, we use the notation~$\bar{x}(t_{n+1})$ for the estimate of the state at $t_{n+1}$ obtained using the DeepONet's~$\mathcal{F}_{\theta^*}$ forward pass. 

We detail the proposed \textit{data-driven} RK explicit DeepONet-based scheme in~Algorithm~\ref{alg:data-driven-rk-scheme} and {Figure~\ref{fig:data-driven-RK-DeepONet}.} Two remarks about our algorithm are provided next. (i) For simplicity, we only present our scheme for the improved Euler method or RK-2~\cite{iserles2009first}. However, we remark that we can extend our idea to any explicit Runge-Kutta scheme. (ii) If $u_m^{n+1}$ is available at $t_n$, then we can compute $k_2$ as follows:
\begin{align} \label{eq:alternate-estimate}
k_2 = \frac{d}{dt}(\mathcal{F}_{\theta^*}(\check{x}(t_n), u^{n+1}_m)(h_n)).  \end{align}
Then, equations~\eqref{eq:current-estimate} and~\eqref{eq:alternate-estimate} will work as a predictor-corrector scheme with the updated input information. Other strategies can also be adopted within the proposed RK scheme. However, we let the design of such strategies for our future work. 
\begin{algorithm}[t]
\DontPrintSemicolon
\SetAlgoLined
\textbf{Require:} initial state vector~$x(a) = x_0$, partition~$\mathcal{P}$, input~$u^n_m$, for $n=1,\ldots,M-1$, and trained DeepONet~$\mathcal{F}_{\theta^*}$.\;
initialize $\check{x}(t_0) = x_0$\;
\For{$n = 0,\ldots,M-1$}{
  update the independent variable~$t_{n+1} = t_n + h_n$\;
  use the DeepONet's forward pass to compute
  $$\bar{x}(t_{n+1}) = \mathcal{F}_{\theta^*}(\check{x}(t_n), u^n_m)(h_n).$$}
  \vspace{-1.0em}\;
  use \textit{automatic differentiation} to estimate the vector field~$f$ at $t = t_n$ and $t = t_{n+1}$
  \begin{subequations} \label{eq:estimates}
  \begin{align}
  k_1 & =\frac{d}{dt}(\mathcal{F}_{\theta^*}(\check{x}(t_n), u^n_m)(0)) = f(\hat{x}(t_n),u_m^n), \label{eq:current-estimate} \\
  k_2 &= \frac{d}{dt}\bar{x}(t_{n+1}) = f(\bar{x}(t_{n+1}),u_m^n). \label{eq:future-estimate}
  \end{align}
  \end{subequations}
  \vspace{-1.0em}\;
  update the state vector with the improved Euler~(RK-2) step
  \begin{align} \label{eq:data-driven-RK}
  \check{x}(t_{n+1}) = \check{x}(t_n) + \frac{h_n}{2} (k_1 + k_2).
  \end{align}
  \vspace{-1.5em}\;
  \textbf{Return:} predicted response~$\{\check{x}(t_n) : t_n \in \mathcal{P}\}$.\;
 \caption{Data-Driven Runge-Kutta Scheme}
 \label{alg:data-driven-rk-scheme}
\end{algorithm}

\begin{figure}[t!]
\tikzset{every picture/.style={line width=0.75pt}}
\begin{tikzpicture}[x=0.75pt,y=0.65pt,yscale=-0.8,xscale=0.78]

\draw  [fill={rgb, 255:red, 184; green, 233; blue, 134}  ,fill opacity=1 ] (142,97) -- (374,97) -- (374,145) -- (142,145) -- cycle ;
\draw (651,80) -- (651,100) ;
\draw [shift={(651,102)}, rotate = 270] [color={rgb, 255:red, 0; green, 0; blue, 0 }  ][line width=0.75]    (10.93,-3.29) .. controls (6.95,-1.4) and (3.31,-0.3) .. (0,0) .. controls (3.31,0.3) and (6.95,1.4) .. (10.93,3.29)   ;
\draw  [fill={rgb, 255:red, 241; green, 182; blue, 188 }  ,fill opacity=1 ] (21,59) .. controls (21,45.19) and (38.91,34) .. (61,34) .. controls (83.09,34) and (101,45.19) .. (101,59) .. controls (101,72.81) and (83.09,84) .. (61,84) .. controls (38.91,84) and (21,72.81) .. (21,59) -- cycle ;
\draw    (61,10) -- (61,32) ;
\draw [shift={(61,34)}, rotate = 270] [color={rgb, 255:red, 0; green, 0; blue, 0 }  ][line width=0.75]    (10.93,-3.29) .. controls (6.95,-1.4) and (3.31,-0.3) .. (0,0) .. controls (3.31,0.3) and (6.95,1.4) .. (10.93,3.29)   ;
\draw    (61,10) -- (751,11) ;
\draw  [fill={rgb, 255:red, 241; green, 182; blue, 188 }  ,fill opacity=1 ] (21,121) .. controls (21,107.19) and (38.91,96) .. (61,96) .. controls (83.09,96) and (101,107.19) .. (101,121) .. controls (101,134.81) and (83.09,146) .. (61,146) .. controls (38.91,146) and (21,134.81) .. (21,121) -- cycle ;
\draw  [fill={rgb, 255:red, 241; green, 182; blue, 188 }  ,fill opacity=1 ] (21,184) .. controls (21,170.19) and (38.91,159) .. (61,159) .. controls (83.09,159) and (101,170.19) .. (101,184) .. controls (101,197.81) and (83.09,209) .. (61,209) .. controls (38.91,209) and (21,197.81) .. (21,184) -- cycle ;
\draw    (120.5,59) -- (121.5,184) ;
\draw    (101,59) -- (120.5,59) ;
\draw    (101.5,121.5) -- (140.5,121.5) ;
\draw    (140,121) ;
\draw [shift={(140,121)}, rotate = 180] [color={rgb, 255:red, 0; green, 0; blue, 0 }  ][line width=0.75]    (10.93,-3.29) .. controls (6.95,-1.4) and (3.31,-0.3) .. (0,0) .. controls (3.31,0.3) and (6.95,1.4) .. (10.93,3.29)   ;
\draw    (101,184) -- (121.5,184) ;
\draw  [fill={rgb, 255:red, 184; green, 233; blue, 134 }  ,fill opacity=1 ] (404,65) -- (616,65) -- (616,113) -- (404,113) -- cycle ;
\draw  [fill={rgb, 255:red, 184; green, 233; blue, 134 }  ,fill opacity=1 ] (402,127) -- (612,127) -- (612,175) -- (402,175) -- cycle ;
\draw  [fill={rgb, 255:red, 184; green, 233; blue, 134 }  ,fill opacity=1 ] (636,100) -- (863,100) -- (863,148) -- (636,148) -- cycle ;
\draw    (386,89) -- (386,155) ;
\draw    (371,122) -- (386,122) ;
\draw    (386,89) -- (399,89) ;
\draw [shift={(401,89)}, rotate = 180] [color={rgb, 255:red, 0; green, 0; blue, 0 }  ][line width=0.75]    (10.93,-3.29) .. controls (6.95,-1.4) and (3.31,-0.3) .. (0,0) .. controls (3.31,0.3) and (6.95,1.4) .. (10.93,3.29)   ;
\draw    (386,155) -- (399,155) ;
\draw [shift={(401,155)}, rotate = 180] [color={rgb, 255:red, 0; green, 0; blue, 0 }  ][line width=0.75]    (10.93,-3.29) .. controls (6.95,-1.4) and (3.31,-0.3) .. (0,0) .. controls (3.31,0.3) and (6.95,1.4) .. (10.93,3.29)   ;
\draw    (651,80) -- (614,80) ;
\draw    (650,165) -- (615,165) ;
\draw    (650,165) -- (650,152) ;
\draw [shift={(650,150)}, rotate = 90] [color={rgb, 255:red, 0; green, 0; blue, 0 }  ][line width=0.75]    (10.93,-3.29) .. controls (6.95,-1.4) and (3.31,-0.3) .. (0,0) .. controls (3.31,0.3) and (6.95,1.4) .. (10.93,3.29)   ;
\draw    (751,11) -- (751,99) ;

\draw (128,69) node [anchor=north west][inner sep=0.75pt]   [align=left] {\begin{small}\textbf{Data-driven RK DeepONet}\end{small}};
\draw (147,106) node [anchor=north west][inner sep=0.75pt]    {\begin{footnotesize}$\overline{x}( t_{n+1})=\mathcal{F}_{\theta ^{*}}\left(\check{x}( t_{n}) ,u_{m}^{n}\right)( h_{n})$\end{footnotesize}};
\draw (41,46.4) node [anchor=north west][inner sep=0.75pt]    {$\check{x}(t_{n})$};
\draw (48,108.4) node [anchor=north west][inner sep=0.75pt]    {$u_{m}^{n}$};
\draw (51,173.4) node [anchor=north west][inner sep=0.75pt]    {$h_{n}$};
\draw (39,219) node [anchor=north west][inner sep=0.75pt]   [align=left] {Inputs};
\draw (699,160) node [anchor=north west][inner sep=0.75pt]   [align=left] {Predicted state};
\draw (408,70) node [anchor=north west][inner sep=0.75pt]{\begin{footnotesize}$k_{1}=\frac{d}{dt}\left(\mathcal{F}_{\theta ^{*}}\left(\check{x}( t_{n}) ,u_{m}^{n}\right)(0)\right)$\end{footnotesize}};
\draw (449,132) node [anchor=north west][inner sep=0.75pt]{\begin{footnotesize}$k_{2}=\frac{d}{dt}\overline{x}( t_{n+1})$\end{footnotesize}};
\draw (638,104) node [anchor=north west][inner sep=0.5pt]{\begin{footnotesize}$\check{x}(t_{n+1}) =\check{x}( t_{n})+\frac{h_{n}}{2}( k_{1} +k_{2})$\end{footnotesize}};
\draw (404,187) node [anchor=north west][inner sep=0.75pt]   [align=left] {Automatic differentiation};
\end{tikzpicture}
\caption{{The data-driven RK DeepONet takes the current state $\check{x}(t_n)$, the local approximation of the input $u_m^n$, and the step-size $h_n$ as inputs. It then recursively predicts the next state of a non-autonomous system using a data-driven RK-2 method, which employs the forward pass of the DeepONet and automatic differentiation to obtain the estimates of the vector field $k_1$ and $k_2$.}}
\label{fig:data-driven-RK-DeepONet}
\end{figure}
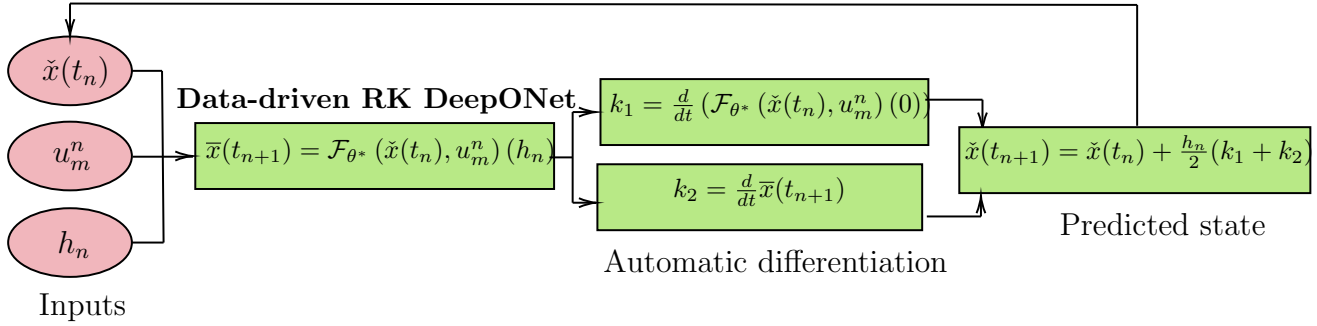

\subsection{Error Bound for the Data-Driven Runge-Kutta Scheme} \label{ssec:error-bound-rk}
Here we derive an improved conditional error bound estimate for~$\hat{x}(t_n)$. To that end, we start by rephrasing the universal approximation theorem of neural network for high-dimensional functions~\cite{cybenko1989approximation}, which we introduced in Section~\ref{ssec:error-bound}. For~$\Bar{e}(u_m)- C_4 h_n^2>0$, there exist $W_1\in\mathbb{R}^{K \times (n+n_s)}$, $b_1\in\mathbb{R}^{K}$, $W_2\in\mathbb{R}^{n \times K}$ and $b_2\in\mathbb{R}^{n}$ such that
\begin{align}
    \bigg \|\varphi(y_n,u_m^n) - \left(W_2 \sigma (W_1 [y_n,u_m^n]^\top + b_1)+b_2\right) \bigg\| < \epsilon,
    \label{universal_approximation_2}
\end{align}
where $\epsilon := \Bar{e}(u_m) - C_4 h^2$ and $C_4>0$ is constant. As before, the two-layer network represents the DeepONet~$\mathcal{F}_{\theta^*}$ for a given~$h_n$.

The following lemma estimates the error between~$\hat{x}$, predicted using the RK scheme (Algorithm~\ref{alg:data-driven-rk-scheme}), and $\tilde{x}$, obtained using the solution operator~\eqref{eq:approximate-solution-operator} of the approximate system~\eqref{eq:approximate-system}.
\begin{lemma} \label{lemma:rk-error-vs-approximate}
Assume $\Phi$ is Lipschitz in $x$ with Lipschitz constant~$C_2$. Suppose the DeepONet is well trained so that \eqref{universal_approximation_2} holds. Then, we have the estimate
\begin{align}
    \|\hat{x}(t_n) - \tilde{x}(t_n)\| \leq \hat{C} \Bar{e}(u_m),
    \label{hat_error}
\end{align}
where $\hat{C} =  \frac{C_1 h}{2}\frac{1 - C_3^n}{1-C_3}$, and $ C_3 = \left(1+(1+C_2)\frac{C_1 h}{2}\right)$.

\end{lemma}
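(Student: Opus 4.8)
The plan is to reduce everything to a single one-step recursion for the error $e_n := \|\hat{x}(t_n) - \tilde{x}(t_n)\|$, namely
\begin{align*}
e_{n+1} \le C_3\, e_n + \frac{C_1 h}{2}\,\bar{e}(u_m),
\end{align*}
and then to unroll it. Since Algorithm~\ref{alg:data-driven-rk-scheme} is initialized with $\hat{x}(t_0) = x_0 = \tilde{x}(t_0)$, we have $e_0 = 0$, and summing the geometric series gives
\begin{align*}
e_n \le \frac{C_1 h}{2}\,\bar{e}(u_m)\sum_{j=0}^{n-1} C_3^{\,j} = \frac{C_1 h}{2}\,\frac{1 - C_3^{\,n}}{1 - C_3}\,\bar{e}(u_m) = \hat{C}\,\bar{e}(u_m),
\end{align*}
which is exactly \eqref{hat_error}. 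Hence the whole argument lives in establishing the recursion.

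To prove the recursion I would introduce an intermediate iterate $\hat{y}_{n+1}$ obtained by applying one data-driven RK--2 step but starting from the approximate-system state $\tilde{x}(t_n)$ instead of $\hat{x}(t_n)$, i.e.
\begin{align*}
\hat{y}_{n+1} = \tilde{x}(t_n) + \frac{h_n}{2}\Big( f(\tilde{x}(t_n), u_m^n) + f(\mathcal{F}_{\theta^*}(\tilde{x}(t_n), u_m^n)(h_n), u_m^n) \Big),
\end{align*}
and split $e_{n+1} \le \|\hat{x}_{n+1} - \hat{y}_{n+1}\| + \|\hat{y}_{n+1} - \tilde{x}_{n+1}\|$ by the triangle inequality. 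The first term is a \emph{stability/propagation} term comparing two identical RK--2 steps that differ only in their starting state. Using the Lipschitz bound on $f$ (constant $C_1$) for the $k_1$ stage and the Lipschitz bound on $\Phi(\cdot, u_m^n, h_n) = \varphi(\cdot, u_m^n)$ (constant $C_2$, inherited by the trained network for the $k_2$ stage), I would obtain
\begin{align*}
\|\hat{x}_{n+1} - \hat{y}_{n+1}\| \le \Big(1 + \frac{C_1 h_n}{2} + C_2\frac{C_1 h_n}{2}\Big) e_n = \Big(1 + (1+C_2)\frac{C_1 h_n}{2}\Big) e_n \le C_3\, e_n,
\end{align*}
where the leading $1$ comes from carrying $\hat{x}(t_n) - \tilde{x}(t_n)$ forward, the $\tfrac{C_1 h_n}{2}$ from the $k_1$ stage, and the $C_2\tfrac{C_1 h_n}{2}$ from the $k_2$ stage through the DeepONet. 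This is precisely what produces $C_3$.

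The second term is the \emph{local consistency} term and is the main obstacle. Here $\hat{y}_{n+1}$ is a Heun/RK--2 quadrature of the exact flow $\tilde{x}_{n+1} = \Phi(\tilde{x}(t_n), u_m^n, h_n) = \tilde{x}(t_n) + \int_{t_n}^{t_{n+1}} f(\tilde{x}(s), u_m^n)\,\mathrm{d}s$, except that the right-endpoint stage evaluates $f$ at the network prediction $\mathcal{F}_{\theta^*}(\tilde{x}(t_n), u_m^n)(h_n)$ rather than at the true flow value. I would split it once more into (i) the genuine RK--2 truncation error of the trapezoidal rule applied to a smooth integrand, bounded by $C_4 h^2$, and (ii) the endpoint-replacement error, bounded by $\tfrac{C_1 h_n}{2}\,\|\mathcal{F}_{\theta^*}(\tilde{x}(t_n), u_m^n) - \varphi(\tilde{x}(t_n), u_m^n)\| < \tfrac{C_1 h_n}{2}\,\epsilon$ via the universal approximation bound \eqref{universal_approximation_2}. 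Since $\epsilon = \bar{e}(u_m) - C_4 h^2$ is defined precisely to absorb the consistency slack, these combine to $\|\hat{y}_{n+1} - \tilde{x}_{n+1}\| \le \tfrac{C_1 h}{2}\bar{e}(u_m)$, closing the recursion. The two delicate points I expect to have to argue carefully are (a) that the trained $\mathcal{F}_{\theta^*}$ inherits (approximately) the Lipschitz constant $C_2$ of $\Phi$, so the $k_2$ stage contributes exactly $C_2\tfrac{C_1 h_n}{2}e_n$ rather than an extra $O(\epsilon)$ slack, and (b) the bookkeeping whereby the built-in margin $C_4 h^2$ inside $\epsilon$ exactly swallows the RK--2 truncation error, leaving the clean residual $\tfrac{C_1 h}{2}\bar{e}(u_m)$.
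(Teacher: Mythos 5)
Your outer skeleton (the one-step recursion $e_{n+1} \le C_3 e_n + \tfrac{C_1 h}{2}\bar{e}(u_m)$, unrolled from $e_0 = 0$ into a geometric sum) is exactly the paper's final step, but your decomposition of the one-step error has a genuine gap, and it is precisely the point you flagged as delicate in (a). Your propagation term compares two RK--2 steps whose $k_2$ stages evaluate the \emph{network} at two different base points, so you need
\begin{align*}
\|\mathcal{F}_{\theta^*}(\hat{x}_n, u_m^n) - \mathcal{F}_{\theta^*}(\tilde{x}_n, u_m^n)\| \le C_2 \|\hat{x}_n - \tilde{x}_n\|,
\end{align*}
i.e.\ that the trained network inherits the Lipschitz constant of $\Phi$. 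Nothing in the hypotheses gives this: \eqref{universal_approximation_2} is a sup-norm closeness statement and controls no modulus of continuity of the network. The honest route (triangle inequality through $\varphi$ at both base points) yields $C_2 e_n + 2\epsilon$, which injects an extra $C_1 h_n \epsilon$ into each step and inflates the residual to $\tfrac{3 C_1 h_n}{2}\bar{e}(u_m)$; you then prove a bound of the same shape but with $3\hat{C}$, not the stated $\hat{C} = \tfrac{C_1 h}{2}\tfrac{1-C_3^n}{1-C_3}$. The paper's proof is arranged specifically to avoid this: it never evaluates the network at two points. It writes $\tilde{x}_{n+1}$ in integral form, replaces the integral by the trapezoid $\tfrac{h_n}{2}(f(\tilde{x}_n,u_m^n)+f(\tilde{x}_{n+1},u_m^n))$ at cost $\mathcal{O}(h^3)$, and then the only network term is $\|\tilde{x}_{n+1}-\bar{x}_{n+1}\| = \|\Phi(\tilde{x}_n,u_m^n,h_n) - \mathcal{F}_{\theta^*}(\hat{x}_n,u_m^n)\|$, which it splits through $\Phi(\hat{x}_n,u_m^n,h_n)=\varphi(\hat{x}_n,u_m^n)$ to get $C_2 e_n + \epsilon$: the state difference is propagated by the \emph{assumed} Lipschitz constant of $\Phi$, and the approximation error $\epsilon$ appears exactly once. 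If you redefine your intermediate iterate so that its $k_2$ stage uses the exact flow value $\tilde{x}_{n+1}$ rather than the network prediction, your two-term split collapses to the paper's argument and the gap disappears.

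A secondary bookkeeping error, your point (b): you bound the trapezoidal truncation by $C_4 h^2$ and claim it cancels against the margin inside $\epsilon = \bar{e}(u_m) - C_4 h^2$. It cannot, because in your splitting the margin enters only through term (ii), multiplied by $\tfrac{C_1 h_n}{2}$, so it can absorb at most an $\mathcal{O}(h^3)$-sized quantity; $C_4 h^2 - \tfrac{C_1 h_n}{2} C_4 h^2 > 0$ whenever $\tfrac{C_1 h_n}{2} < 1$. The absorption works in the paper only because the local trapezoidal truncation is genuinely $\mathcal{O}(h_n^3)$, which matches the order of $\tfrac{C_1 h_n}{2} C_4 h^2$, with $C_4$ chosen large enough relative to the truncation constant. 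State the truncation at its correct order and this part is a fixable slip rather than a gap.
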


\begin{proof}
We denote $\tilde{x}_{n} = \tilde{x}(t_n)$, $\Bar{x}_n = \Bar{x}(t_n)$, and $\hat{x}_n = \hat{x}(t_n)$. Then, it follows from~$f$ being Lipschitz that,
\begin{align}
    \| \tilde{x}_{n+1} - \hat{x}_{n+1} \| & =
    \| \tilde{x}_n + \int_{t_n}^{t_{n+1}}f(\tilde{x}(s), u_m^n)ds
    - \hat{x}_{n} \nonumber\\ &\qquad - \frac{h_n}{2} (f(\bar{x}_{n+1}, u_m^n) + f(\hat{x}_n, u_m^n))\| \nonumber\\
    & \leq \|\tilde{x}_n - \hat{x}_{n}\| + \frac{h_n}{2}\|f(\tilde{x}_n, u_m^n)-f(\hat{x}_n, u_m^n) \nonumber \\ &\qquad + f(\tilde{x}_{n+1}, u_m^{n})
    - f(\bar{x}_{n+1}, u_m^{n})\|  +\textcolor{black}{\mathcal{O}(h^3)}\nonumber\\
    & \leq \|\tilde{x}_n - \hat{x}_{n}\| + \frac{C_1h_n}{2}\|\tilde{x}_n-\hat{x}_n\| \nonumber\\ &\qquad+\frac{C_1 h_n}{2}\|\tilde{x}_{n+1}-\bar{x}_{n+1}\|
   \textcolor{black}{ +\mathcal{O}(h^3)} \nonumber\\
    & \leq \left(1 + \frac{C_1 h_n}{2} \right)\|\tilde{x}_n-\hat{x}_n\| \nonumber \\&\qquad +
    \frac{C_1 h_n}{2}\|\tilde{x}_{n+1}-\bar{x}_{n+1}\|
    +\mathcal{O}(h^3).
    \label{eqn1}
\end{align}
An estimate of the above term~$\|\tilde{x}_{n+1}-\bar{x}_{n+1}\|$ is
\begin{align}
    \|\tilde{x}_{n+1}-\bar{x}_{n+1}\| & = \|\Phi(\tilde{x}_n, u_m^n, h_n) - \mathcal{F}_{\theta^*}(\hat{x}_n,u_m^n)\|\nonumber \\
    & \leq \|\Phi(\tilde{x}_n, u_m^n, h_n) - \Phi(\hat{x}_n, u_m^n, h_n) \nonumber \\&\qquad~\quad + \varphi(\hat{x}_n, u_m^n) - \mathcal{F}_{\theta^*}(\hat{x}_n,u_m^n) \| \nonumber \\
    &\leq C_2\|\tilde{x}_n-\hat{x}_n\| +\Bar{e}(u_m)\textcolor{black}{-C_4 h^2},
    \label{eqn2}
\end{align}
where $C_4>0$ is a constant. Substituting \eqref{eqn2} back into \eqref{eqn1} yields
\begin{align}
    \| \tilde{x}_{n+1} - \hat{x}_{n+1} \| & \leq \left(1+(1+C_2)\frac{C_1 h_n}{2}\right)\|\tilde{x}_n-\hat{x}_n\| \nonumber \\&\qquad~\qquad + \frac{C_1 h_n}{2}\Bar{e}(u_m).
\end{align}
Recursive estimation and $\hat{x}_0 = \tilde{x}_0$ gives the desired error bound~\eqref{hat_error}.
\end{proof}
Two remarks about the error bound are as follows. (i)~Note that the proposed \textit{data-driven} RK DeepONet-based scheme provides an improved error bound~\eqref{hat_error} when compared to the bound obtained in~\eqref{check_error}. More specifically, the growth factor~$C_3$ behaves like~$C_2$ in~\eqref{check_error}. However, when $h_n < \frac{1}{C_1}\cdot\frac{2C_2-2}{C_2+1}$, a smaller factor can be derived. (ii)~We can extend the proof provided here for the RK-2 scheme to any other RK explicit scheme. We will analyze this in our future work. Let us conclude this section with the following theorem that summarizes the error of the proposed~\textit{data-driven} RK scheme.
\begin{theorem}
For any $t_n\in\mathcal{P}$, we have
\begin{align}
    \|x(t_n) - \hat{x}(t_n)\|\leq \tilde{C}\Bar{e}(u_m) + \hat{C}\Bar{e}(u_m),
\end{align}
where $\Tilde{C}$, $\hat{C}$ and $\Bar{e}(u_m)$ are, respectively, the constants defined in~\eqref{tilde_error}, \eqref{hat_error}, and \eqref{tilde_error}.
\end{theorem}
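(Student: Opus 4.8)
The plan is to prove this theorem as a direct triangle-inequality assembly of the two error bounds already established, using the solution $\tilde{x}_n$ of the approximate system~\eqref{eq:approximate-system} as the intermediate reference point between the true state $x(t_n)$ and the RK-predicted state $\hat{x}(t_n)$. Concretely, I would write
\begin{align*}
\|x(t_n) - \hat{x}(t_n)\| \leq \|x(t_n) - \tilde{x}(t_n)\| + \|\tilde{x}(t_n) - \hat{x}(t_n)\|.
\end{align*}
This mirrors exactly the decomposition used for the earlier theorem on the plain DeepONet scheme, the only change being that the second term is now controlled by the RK-specific estimate rather than by the one-shot DeepONet estimate~\eqref{check_error}.

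The first term is bounded directly by the estimate~\eqref{tilde_error}, giving $\|x(t_n) - \tilde{x}(t_n)\| \leq \tilde{C}\,\Bar{e}(u_m)$; this quantifies the error incurred by replacing the input function $u$ by its local discretization $u_m$ and is driven by the Gronwall argument behind~\eqref{tilde_gronwall}. The second term is bounded by Lemma~\ref{lemma:rk-error-vs-approximate}, giving $\|\tilde{x}(t_n) - \hat{x}(t_n)\| \leq \hat{C}\,\Bar{e}(u_m)$, which aggregates the network approximation error and the one-step RK truncation error of the data-driven scheme. Summing the two bounds yields the claimed estimate $\|x(t_n) - \hat{x}(t_n)\| \leq \tilde{C}\,\Bar{e}(u_m) + \hat{C}\,\Bar{e}(u_m)$, with $\tilde{C}$, $\hat{C}$, and $\Bar{e}(u_m)$ as defined in~\eqref{tilde_error} and~\eqref{hat_error}.

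I do not anticipate a genuine analytical obstacle in the theorem itself: once the two lemmas are in place, the result is an immediate consequence of the triangle inequality, and the same chain $x_n \to \tilde{x}_n \to \hat{x}_n$ that worked for the DeepONet scheme carries over verbatim. The one point requiring a word of care is the consistency of the quantity $\Bar{e}(u_m)$ across the two lemmas, since the RK estimate is derived from the sharpened approximation bound~\eqref{universal_approximation_2} with the $-C_4 h^2$ correction term, whereas the input-discretization bound~\eqref{tilde_error} uses $\Bar{e}(u_m)$ directly; I would therefore confirm that both contributions are expressed relative to the same $\Bar{e}(u_m)$ before adding them. The substantive work — the Gronwall step underlying~\eqref{tilde_error} and the recursive estimate underlying~\eqref{hat_error} — has already been done in the preceding lemmas, so this theorem is purely an aggregation step.
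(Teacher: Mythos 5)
Your proposal is correct and matches the paper's intent exactly: the paper states this theorem as an immediate consequence of the bounds \eqref{tilde_error} and \eqref{hat_error}, combined via the triangle inequality through the intermediate point $\tilde{x}(t_n)$, which is precisely your decomposition. Your added remark about confirming that both lemmas are expressed relative to the same $\Bar{e}(u_m)$ is a sensible consistency check, and it holds since Lemma~\ref{lemma:rk-error-vs-approximate} absorbs the $-C_4h^2$ correction so that its final bound is stated in terms of $\Bar{e}(u_m)$ itself.
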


\section{Numerical Experiments} \label{sec:numerical-experiments}
To evaluate our framework, we tested the proposed DeepONets on five tasks: the autonomous Lorentz 63 system (in Section~\ref{ssec:Lorentz}), the predator-prey dynamics with control (in Section~\ref{ssec:predator-prey}), the pendulum swing-up (in Section~\ref{ssec:pendulum}), the cart-pole system (in Section~\ref{ssec:cartpole}), and a power engineering application (in Section~\ref{ssec:power-eng}). For the three control tasks, we used only $n_s=1$ sensor. The reasons for selecting only one sensor are twofold. First, we want to show that DeepONet is effective even when the input signal is encoded with minimal information. For reference, in~\cite{qin2021data}, the authors encoded the input signals (used in their experiments) with at least $n_s = 3$ interpolation points (sensors). Second, the $n_s=1$ sensor scenario resembles the scenario of sampled-data control systems~\cite{wittenmark2002computer} or reinforcement learning tasks~\cite{sutton2018reinforcement} with continuous action space. Finally, for the power engineering application task, we used $n_s=2$ sensors. 

\textit{Training dataset.} For each of the three continuous control tasks (predator-prey, pendulum, and cart-pole systems), we generate the training dataset~$\mathcal{D}_\text{train}$ as follows. We use Runge-Kutta~(RK-4)~\cite{iserles2009first} to simulate $N_\text{train}$ trajectories of size \textit{two}. For each trajectory, the input to RK-4 is the initial condition~$x(t_n)$ uniformly sampled from~$\mathcal{X}$ and the input~$u(t_n)$ uniformly sampled from the set~$\mathcal{U}$. The output from the RK-4 algorithm is the state~$x(t_n + h)$, where $h$ is uniformly sampled from the interval~$[0,0.25]$. Such a procedure gives the dataset:
$$\mathcal{D}_\text{train}=\{(x_i(t_n),u_i(t_n)),h_i, x_i(t_n + h_i)\}_{i=1}^{N_\text{train}}.$$

\textit{Training protocol and neural networks.} We implemented our framework using JAX~\cite{jax2018github}\footnote{We will publish the code on GitHub after publication.} The neural networks for the Branch and Trunk Nets are the modified fully-connected networks proposed in~\cite{wang2021understanding} and used in our previous paper~\cite{moya2023deeponet}. We trained the parameters of the networks using Adam~\cite{kingma2014adam}. Moreover, we selected (i) the default hyper-parameters for the Adam algorithm and (ii) an initial learning rate of $\eta = 0.001$ that exponentially decays every 2000 epochs.
\subsection{The Autonomous Lorentz 63 System} \label{ssec:Lorentz}
To evaluate the proposed framework, we first consider the autonomous and chaotic Lorenz 63 system with the following dynamics:
\begin{gather}
\begin{aligned} \dot{x} &= \sigma(y-x), \\ \dot{y} &= x(\rho - z) - y, \\ \dot{z} &= xy - \beta z, 
\end{aligned} \label{eq:Lorentz}
\end{gather}
with parameters $\sigma=10,~\rho=28,$  and $\beta = 8/3.$ Notice that, compared to non-autonomous systems, autonomous systems only require the previous state to predict the next state, simplifying the learning problem. Nevertheless, we do recognize that the chaotic nature of the Lorentz system can pose a challenge to error accumulation over time. To address this, we have arbitrarily increased the size of the training dataset, which is described next.

The \textit{training dataset} for this autonomous system is provided as a collection of 20000 scatter one-step responses, or trajectories of size two. Specifically, $D_\text{train} = \{x_i(t_n), h_i, x_i(t_n + h_i) \}_{i=1}^{N_\text{train}}$, where $x_i(t_n)$ is sampled from the state space $\mathcal{X}:=[-17,20] \times [-23,28] \times [0,50]$, and the step sizes $h_i$ are sampled from the interval $[0,0.02]$. Note that we keep the step size small to track the error accumulation of the chaotic system, but this requires us to increase the size of our training dataset and normalize the state space.

The trained DeepONet is used to predict the response of the autonomous Lorentz 63 system over the time-domain $[0,20]$ seconds, for the initial condition $(x(0),y(0),z(0))=(0,1,1)$, using a uniform partition $\mathcal{P}$ with a fixed step size of $h=0.01$. Figure~\ref{fig:Lorentz} shows the comparison between the recursive DeepONet prediction obtained from Algorithm~\ref{alg:prediction-scheme} and the true response of the Lorentz system's state variables $x(t)$ and $y(t)$. The results demonstrate excellent agreement between the proposed method and the true values, despite the chaotic nature of the autonomous Lorentz system.

\begin{figure}[t!]
\centering
\begin{subfigure}[b]{0.49\textwidth}
\centering
\includegraphics[width=1.0\textwidth, height=6.0cm]{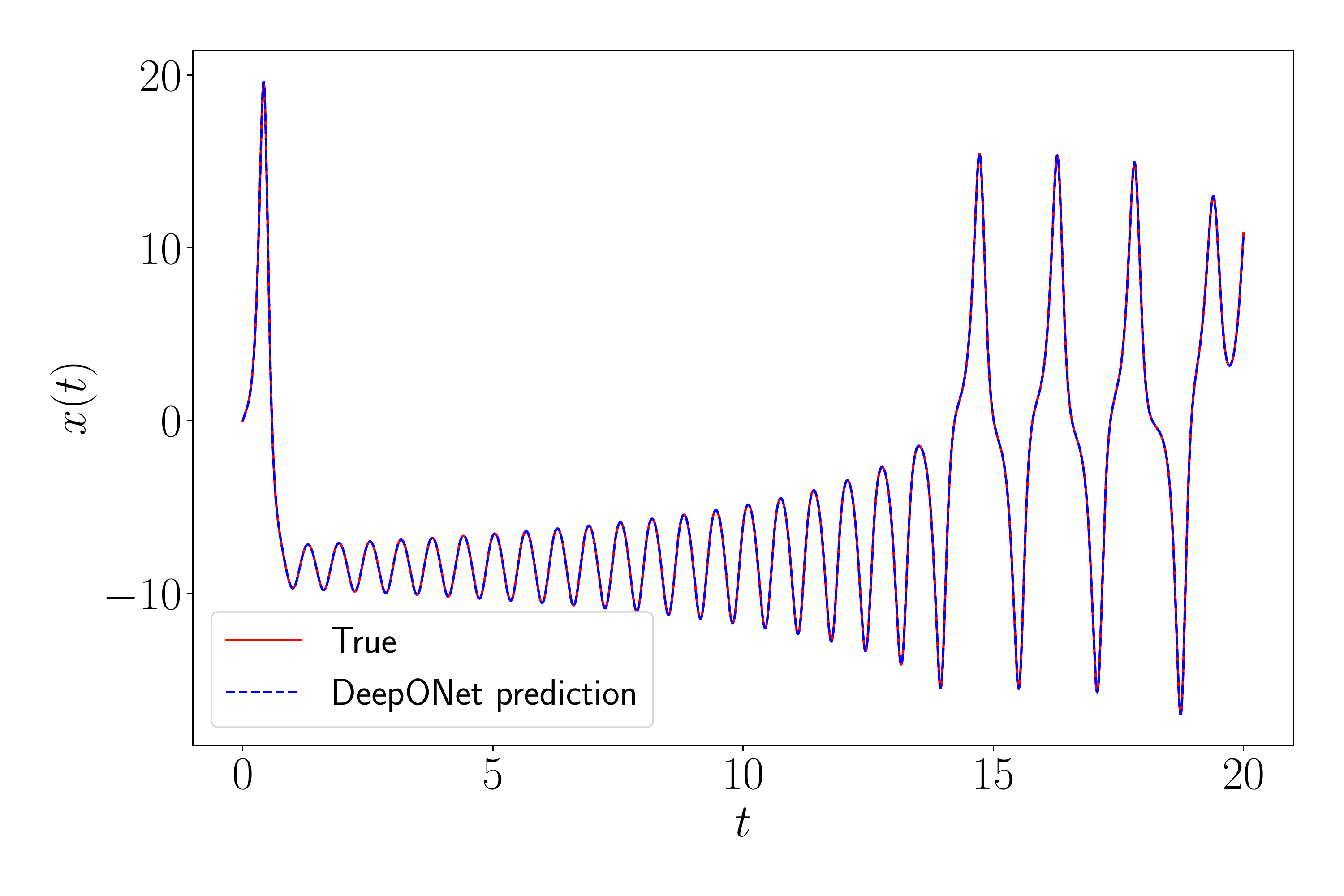}
\end{subfigure}
\begin{subfigure}[b]{0.49\textwidth}
\centering
\includegraphics[width=1.0\textwidth, height=6.0cm]{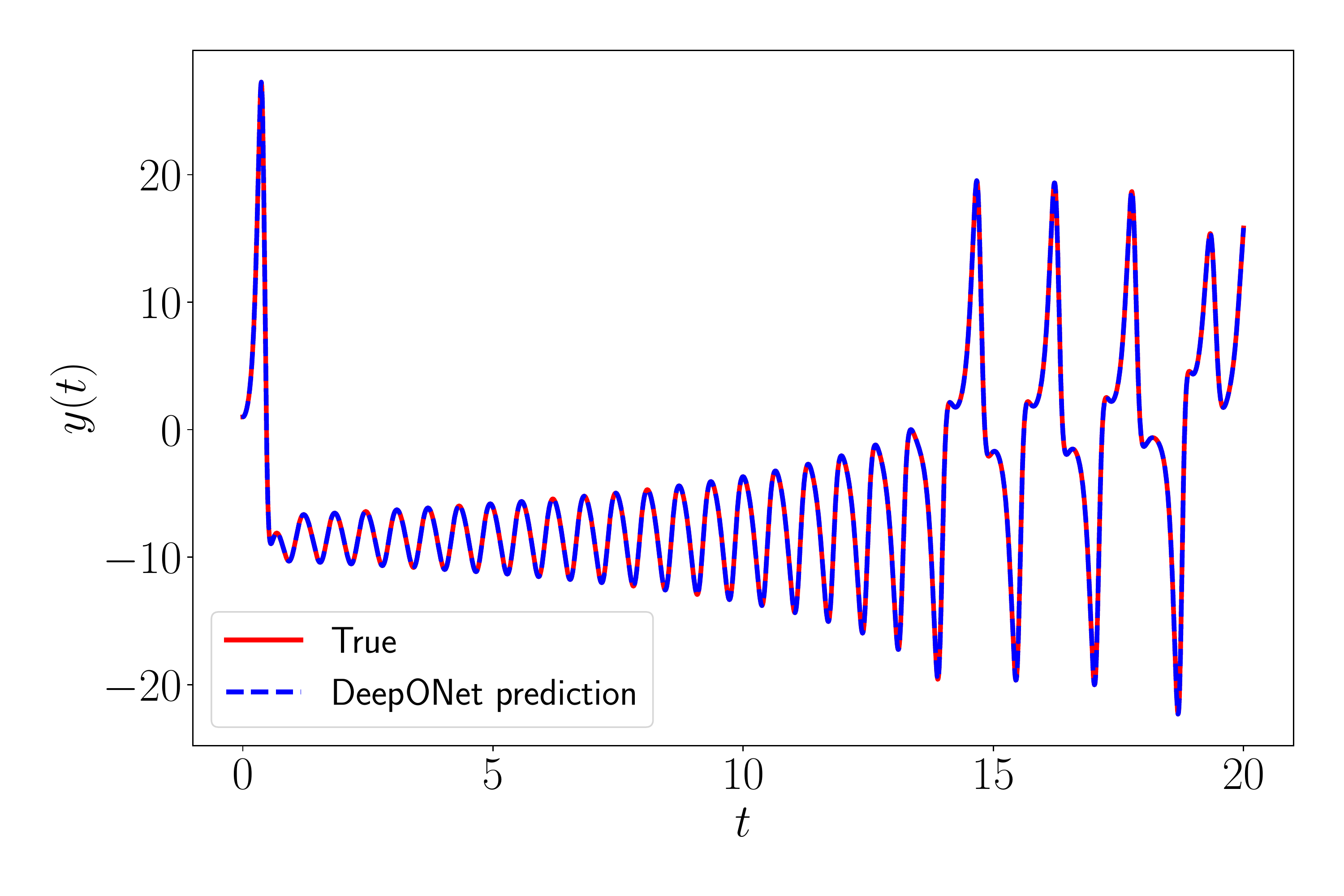}
\end{subfigure}
\caption{Comparison of DeepONet prediction with the actual trajectory of the autonomous Lorentz~\eqref{eq:Lorentz} system's state~$x = (x(t), y(t))^\top$ (\textit{left} is $x(t)$ and \textit{right} is $y(t)$) for the initial condition $(x(0),y(0),z(0))=(0,1,1)$ within the partition $\mathcal{P} \subset [0,20]$~(s) of constant step size $h = 0.01$.}
\label{fig:Lorentz}
\end{figure}

\subsection{The Predator-Prey Dynamics with Control} \label{ssec:predator-prey}
To evaluate our framework, we now consider the following Lotka-Volterra Predator-Prey system with input signal~$u(t)$:
\begin{gather}
    \begin{aligned}
         \dot{x}_1&= x_1 - x_1x_2 + u(t)\\
         \dot{x}_2&= -x_2+x_1x_2.
    \end{aligned}\label{eq:predator-prey}
\end{gather}
The system~\eqref{eq:predator-prey} was also studied in~\cite{qin2021data}, where the authors encoded~$u(t)$ using \textit{three} interpolation points. To train our DeepONet, we generated $N_\text{train}=2000$ trajectories with the initial condition~$x_i(t_n)$ (resp. input signal~$u_i(t_n)$) sampled from the state space~$\mathcal{X}:=[0,5]^2$ (resp. input space~$\mathcal{U}:=[0,5]$).

We use the trained DeepONet to predict the response of the predator-prey system (see equation \eqref{eq:predator-prey}) to the input signal $u(t) = \sin(t/3) + \cos(t) +2$ within a partition $\mathcal{P} \subset [0,100]$~(s) with a constant step size $h = 0.1 \equiv t_{n+1} - t_n$, where $t_n, t_{n+1} \in \mathcal{P}$. Figure~\ref{fig:predator-prey-prediction} compares DeepONet's long-term prediction with the true trajectory. Note that the predicted trajectory agrees very well with the true trajectory for both states, $x = (x_1,x_2)^\top$. The $L_2$-relative errors for $x_1$ and $x_2$ are summarized in Table \ref{table:pperror}.
\begin{table}[t]
\centering
\begin{tabular}{| c | c  c|}
\hline
& $x_1$ & $x_2$\\
\hline
\hline
$L_2$ relative error & 2.42 \% & 0.93 \%\\
\hline
\end{tabular}
\caption{Relative errors of predator and prey dynamics with control. }
\label{table:pperror}
\end{table}

\begin{figure}[t!]
\centering
\begin{subfigure}[b]{0.49\textwidth}
\centering
\includegraphics[width=1.0\textwidth, height=6.0cm]{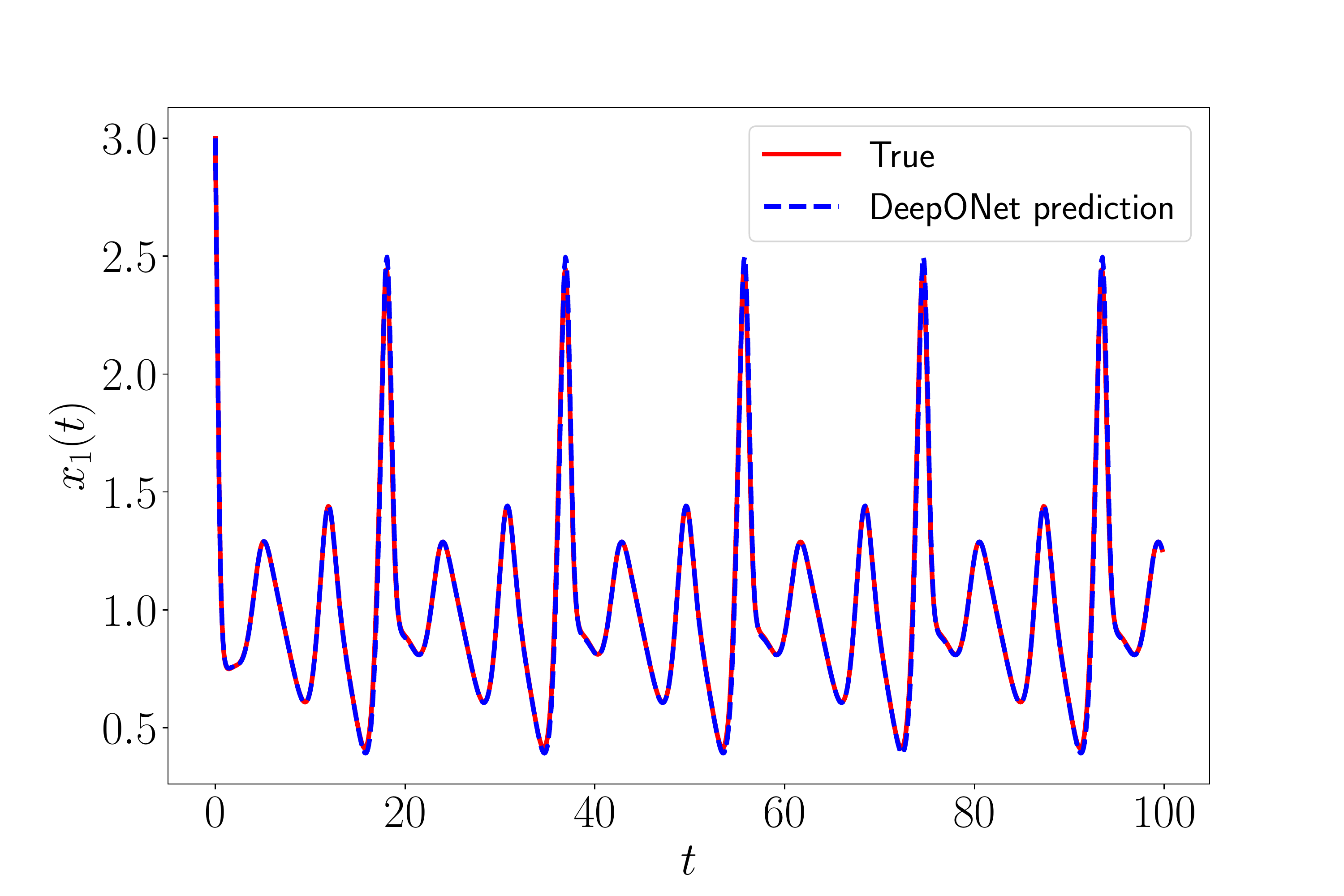}
\end{subfigure}
\begin{subfigure}[b]{0.49\textwidth}
\centering
\includegraphics[width=1.0\textwidth, height=6.0cm]{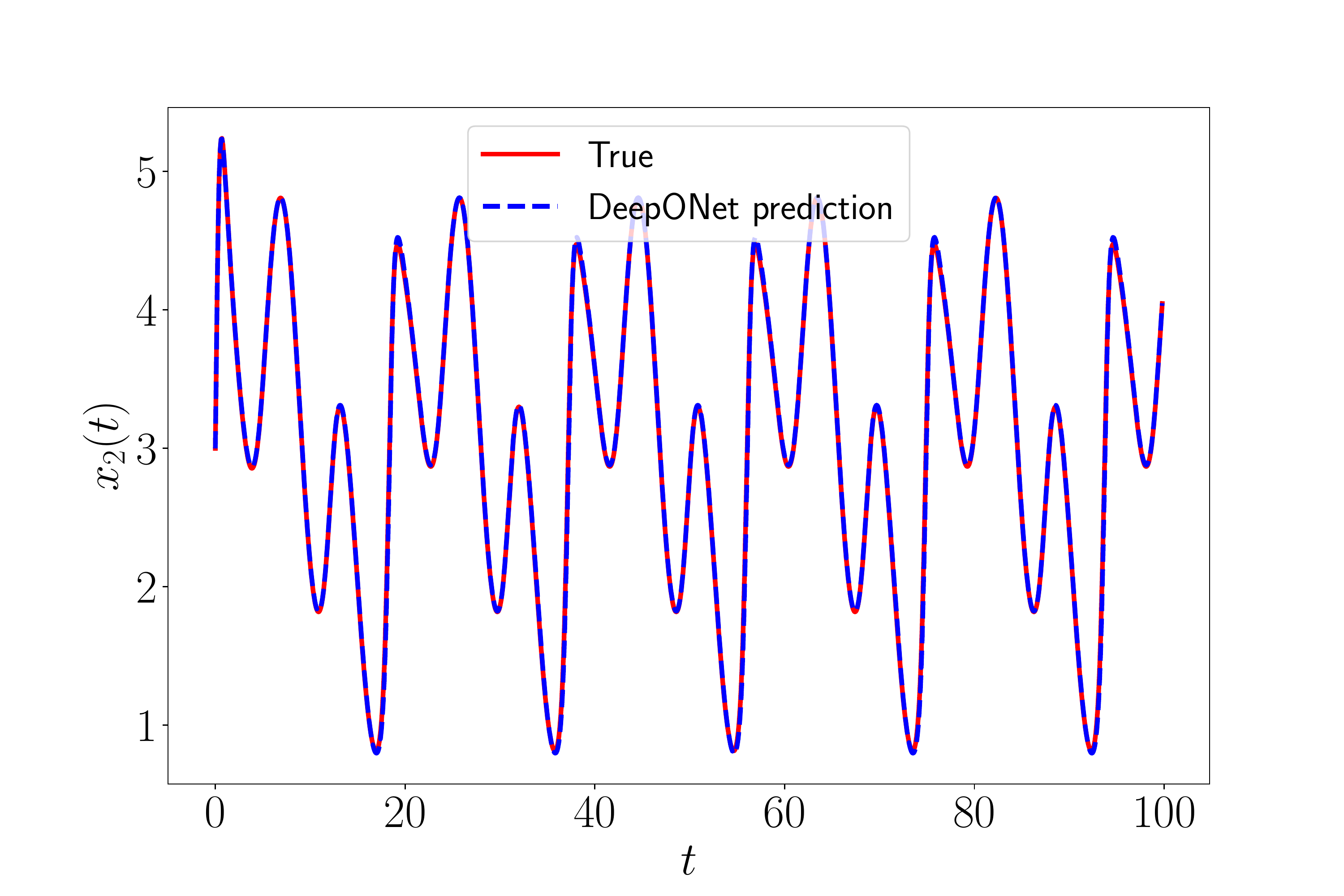}
\end{subfigure}
\caption{Comparison of DeepONet prediction with the actual trajectory of the predator-prey~\eqref{eq:predator-prey} system's state~$x = (x_1(t), x_2(t))^\top$ (\textit{left} is $x_1(t)$ and \textit{right} is $x_2(t)$) response to the input signal~$u(t) = \sin(t/3) + \cos(t) +2$ within the partition $\mathcal{P} \subset [0,100]$~(s) of constant step size $h = 0.1$.}
\label{fig:predator-prey-prediction}
\end{figure}

\subsection{Pendulum Swing-Up} \label{ssec:pendulum}
Let us now consider the following pendulum swing-up control system:
\begin{align} \label{eq:pendulum-swing-up}
    \ddot{\theta} \left(\frac{1}{4} m l^2 + I \right) + \frac{1}{2} m l g \sin \theta = u(t) - b \dot{\theta},
\end{align}
where $x = (\theta, \dot{\theta})^\top \in \mathcal{X}$ is the state vector, $\theta$ the pendulum's angle, $\dot{\theta}$ the angular velocity, and $u(t) \in \mathcal{U}$ the control torque. We set the parameters to the following values. The pendulum's mass is~$m = 1$~(kg), the length is~$l=1$~(m), the moment of inertia of the pendulum around the midpoint is~$I = \frac{1}{12}ml^2$, and the friction coefficient is~$b = 0.01$~(sNm/rad).

We trained the proposed framework using $N_\text{train}=5000$ samples, each consisting of a tuple $\{(x(t_n),u(t_n)), h_n, x(t_n + h_n)\}$. The initial condition, $x(t_n)$ (the state of the non-autonomous pendulum system), was sampled from the uniform distribution $\mathcal{X}:=[-\pi,\pi] \times [-8,8]$. The step size $h_n$ was obtained by uniformly sampling from the interval $(0, 0.02]$. The control torque, $u(t_n)$, was constant on the interval $[t_n, t_{n+1}]$ and was sampled from the uniform distribution $\mathcal{U}:=[-2,2]$. Finally, we obtained the terminal state, $x(t_n + h_n)$, by evolving according to the non-autonomous dynamics (see equation \eqref{eq:pendulum-swing-up}). It is worth noting that the the proposed operator learning setting allows us to handle datasets with incomplete data, as $h_n = t_{n+1}-t_n$ is not identical for all $5000$ training samples. Our methods thus have much more flexibility in terms of data preparation.

\textit{Stable response.} We begin by using the trained DeepONet to predict the pendulum's response to the input $u_1(t) = - 0.8 \dot{\theta}(t)$ within the partition $\mathcal{P} \subset [0,10]$ (s) with a step size of $h = 0.1$ (s). This input yields state trajectories $\{(\theta(t_n), \dot{\theta}(t_n)):t_n \in \mathcal{P}\}$ that settle to an asymptotic equilibrium point. Figure~\ref{fig:pendulum-prediction} shows the excellent agreement between the predicted and actual trajectories.
\begin{figure}[t!]
\centering
\begin{subfigure}[b]{0.49\textwidth}
\centering
\includegraphics[width=1.1\textwidth, height=6.0cm]{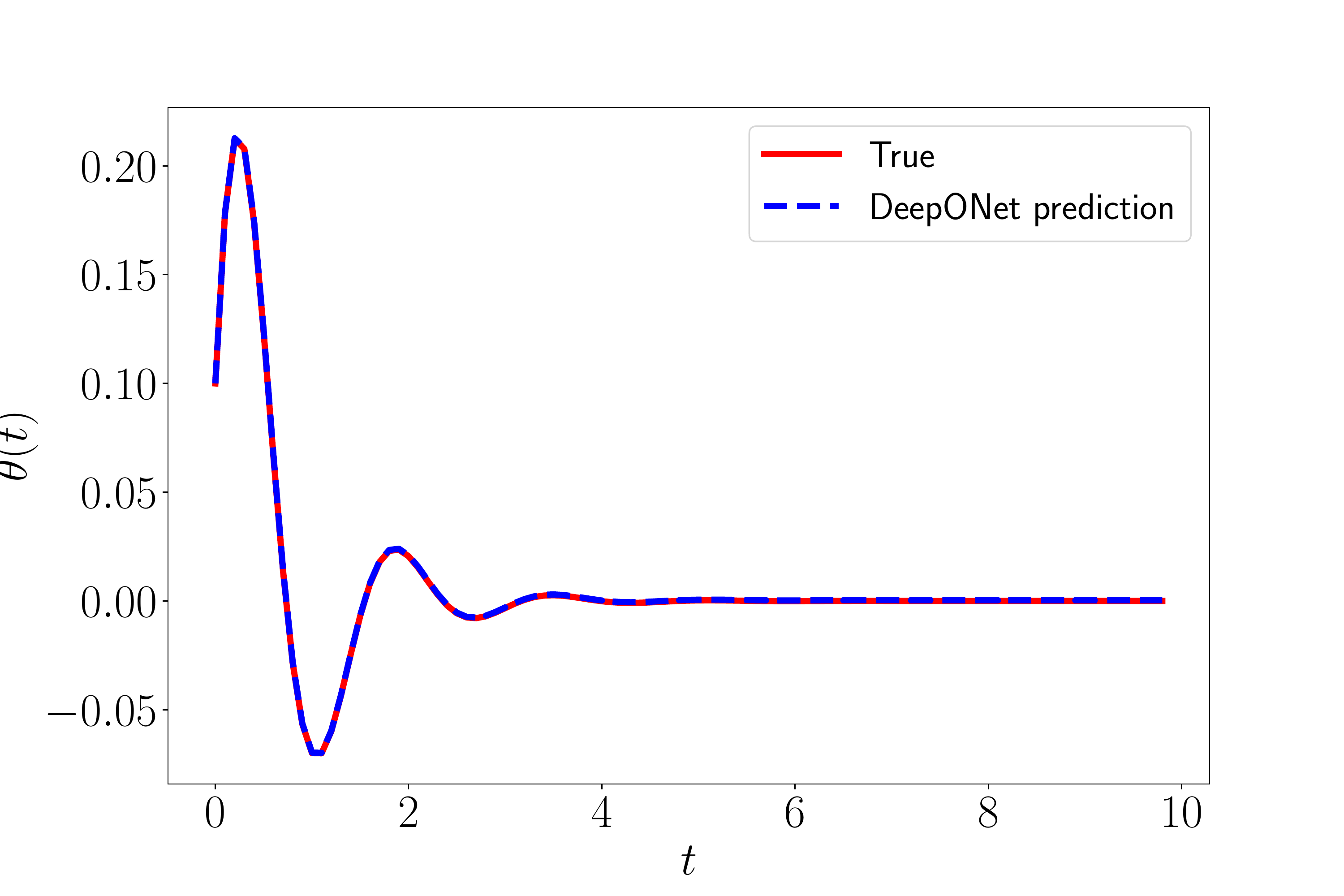}
\end{subfigure}
\begin{subfigure}[b]{0.48\textwidth}
\centering
\includegraphics[width=1.1\textwidth, height=6.0cm]{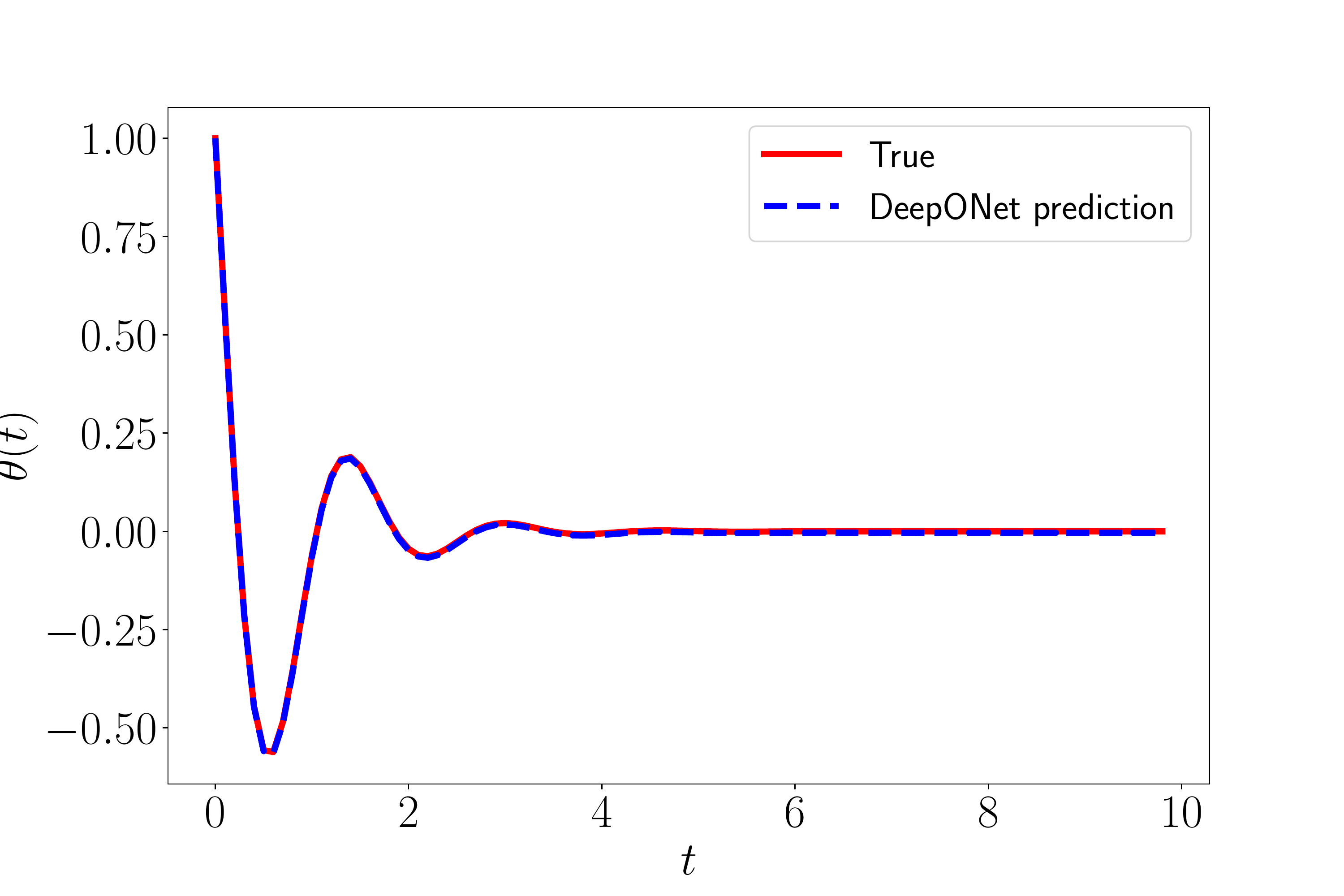}
\end{subfigure}
\caption{Comparison of DeepONet prediction with the actual trajectories of the pendulum~\eqref{eq:pendulum-swing-up} system's state~$x = (\theta(t), \dot{\theta}(t))^\top$ (\textit{left} is angle $\theta(t)$ and \textit{right} is velocity $\dot{\theta}(t)$) response to the input signal~$u(t) = -0.8 \dot{\theta}(t)$ within the partition $\mathcal{P} \subset [0,10]$~(s) of constant step size $h = 0.1$. }
\label{fig:pendulum-prediction}
\end{figure}

To test the predictive power of the proposed framework, we compute the average and standard deviation (st. dev.) of the $L_2$-relative error between the predicted and actual response of the pendulum to the following: (i) the control torque $u_1(t)$, and (ii) $100$ initial conditions sampled from the set $\mathcal{X}_o := \{\theta, \dot{\theta}:\theta \in [-\pi/2,\pi/2], \dot{\theta} = 0\}$. Table~\ref{table:pendulum-error} reports the results, demonstrating that DeepONet consistently maintains an average $L_2$-relative error of below $1.1\%$ and $3.4\%$ for the angle $\theta(t)$ and the angular velocity $\dot{\theta}(t)$ trajectories, respectively. Please refer to Table \ref{table:pendulum-error} for a summary.

\begin{table}[t]
\centering
\begin{tabular}{| c | c  c|}
\hline
& angle~$\theta(t)$ & angular velocity~$\dot{\theta}(t)$\\
\hline
\hline
mean $L_2$ & 1.056 \% & 3.356 \%\\
st.dev. $L_2$ & 2.509 \% & 8.234\% \\
\hline
\end{tabular}
\caption{The average and standard deviation (st.dev.) of the $L_2-$ relative error between the predicted and actual response trajectories of the pendulum system~\eqref{eq:pendulum-swing-up} to (i) the control torque~$u_1(t) = -0.8 \dot{\theta(t)}$ and (ii) $100$ initial conditions uniformly sampled from the set $\mathcal{X}_o := \{\theta, \dot{\theta}:\theta \in [-\pi/2,\pi/2], \dot{\theta} = 0\}$.}
\label{table:pendulum-error}
\end{table}

\textit{Oscillatory response.} We now test the trained DeepONet for the control torque $u_2(t) = \sin(t/2)$ within the partition $\mathcal{P} \subset [0,10]$ (s) with a constant step size of $h=0.1$. Figure~\ref{fig:pendulum-prediction-u2-h0p1} depicts the excellent agreement between the predicted and actual oscillatory trajectory.

\begin{figure}[t!]
\centering
\begin{subfigure}[b]{0.49\textwidth}
\centering
\includegraphics[width=1.1\textwidth, height=6.0cm]{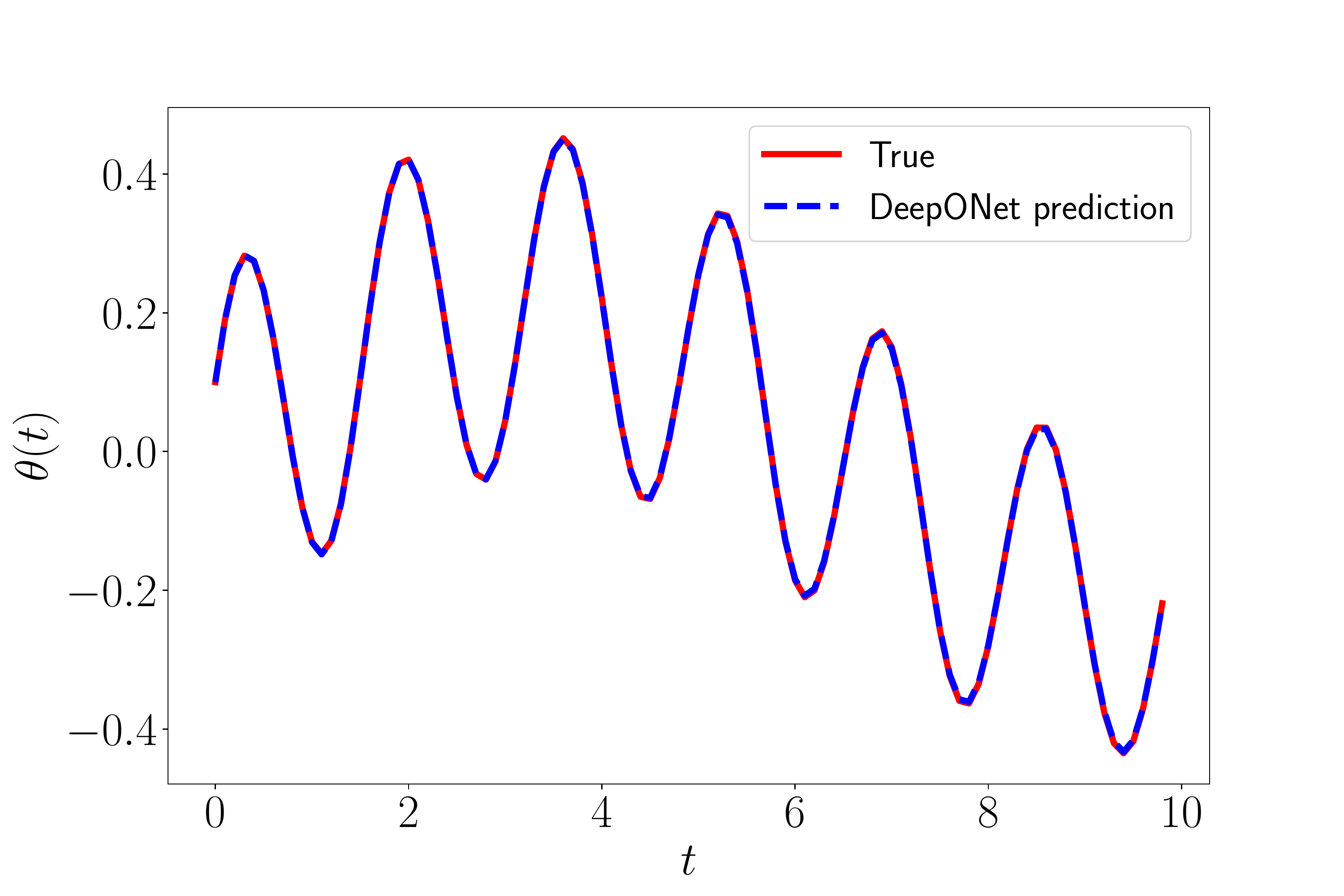}
\end{subfigure}
\begin{subfigure}[b]{0.49\textwidth}
\centering
\includegraphics[width=1.1\textwidth, height=6.0cm]{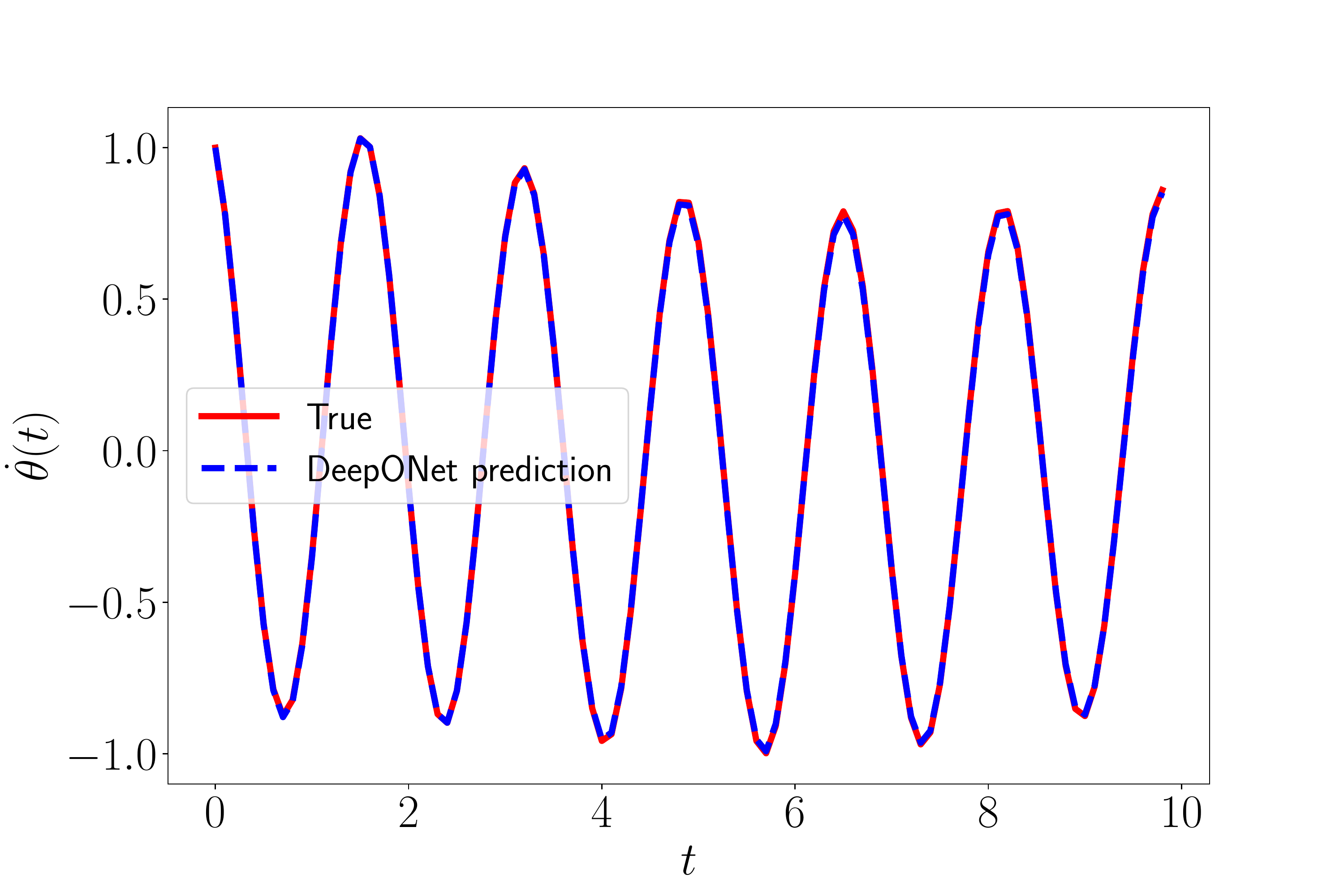}
\end{subfigure}
\caption{Comparison of DeepONet prediction with the actual trajectories of the pendulum~\eqref{eq:pendulum-swing-up} system's state~$x = (\theta(t), \dot{\theta}(t))^\top$ (\textit{left} is angle $\theta(t)$ and \textit{right} is velocity $\dot{\theta}(t)$) response to the input signal~$u(t) = \sin(t/2)$ within the partition $\mathcal{P} \subset [0,10]$~(s) of constant step size $h = 0.1$. The relative errors are presented in Table \ref{table:psu_os1_error}.}
\label{fig:pendulum-prediction-u2-h0p1}
\end{figure}

Let us now consider a different partition $\mathcal{P}'$ with a smaller step size $h=0.00025$. Figure~\ref{fig:pendulum-prediction-u2-hsmall} shows that the proposed DeepONet fails to keep up with the oscillatory response. To improve our prediction, we employ the proposed data-driven Runge-Kutta DeepONet method described in Algorithm~\ref{alg:data-driven-rk-scheme}. Figure~\ref{fig:pendulum-prediction-u2-hsmall} depicts the agreement between the actual trajectory and the trajectory predicted using the data-driven RK DeepONet method. The corresponding $L_2$-relative errors are $7.73\%$ and $11.34\%$ for the angle $\theta(t)$ and the angular velocity $\dot{\theta}(t)$, respectively.
\begin{figure}[t!]
\centering
\begin{subfigure}[b]{0.49\textwidth}
\centering
\includegraphics[width=1.1\textwidth, height=6.0cm]{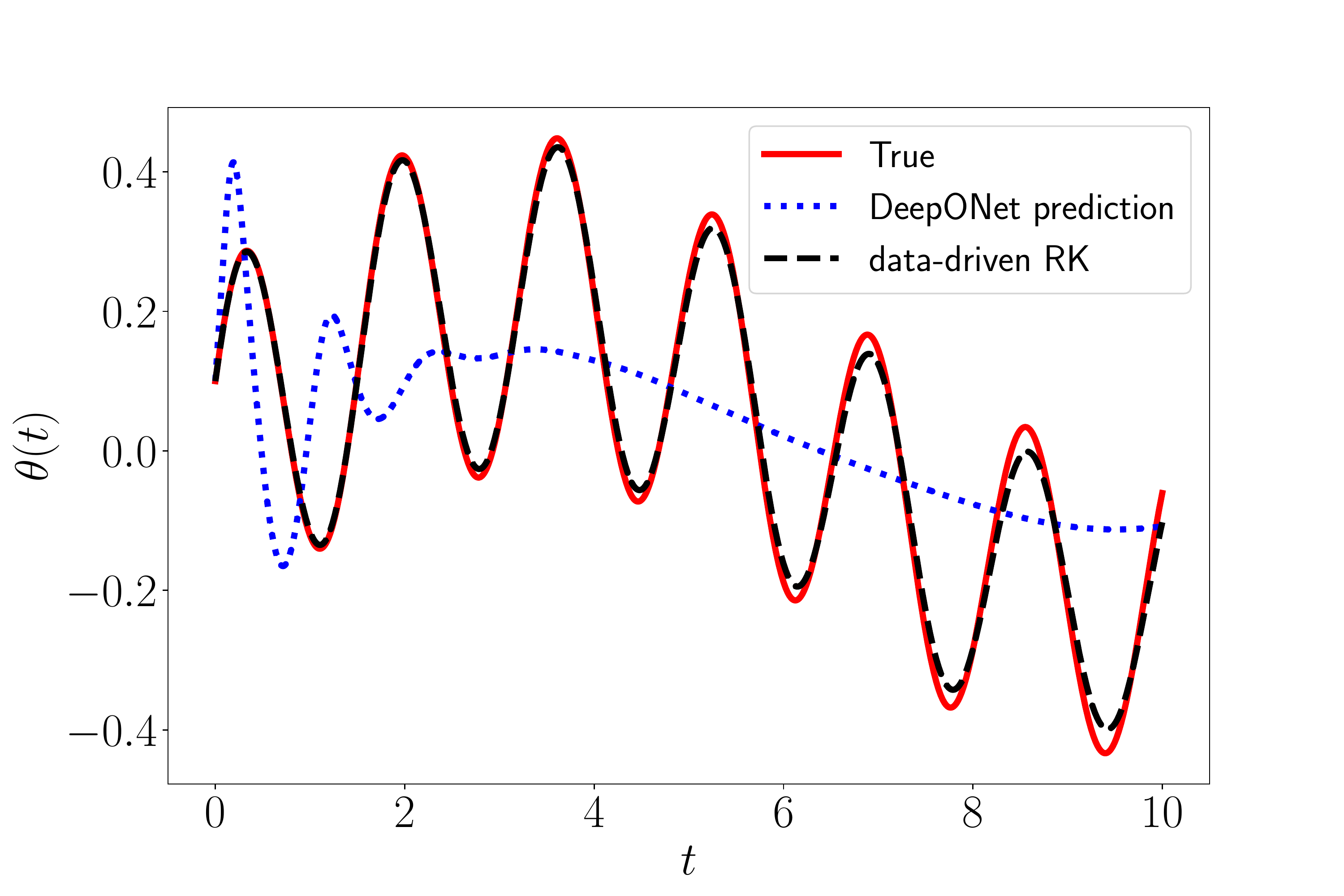}
\end{subfigure}
\begin{subfigure}[b]{0.49\textwidth}
\centering
\includegraphics[width=1.1\textwidth, height=6.0cm]{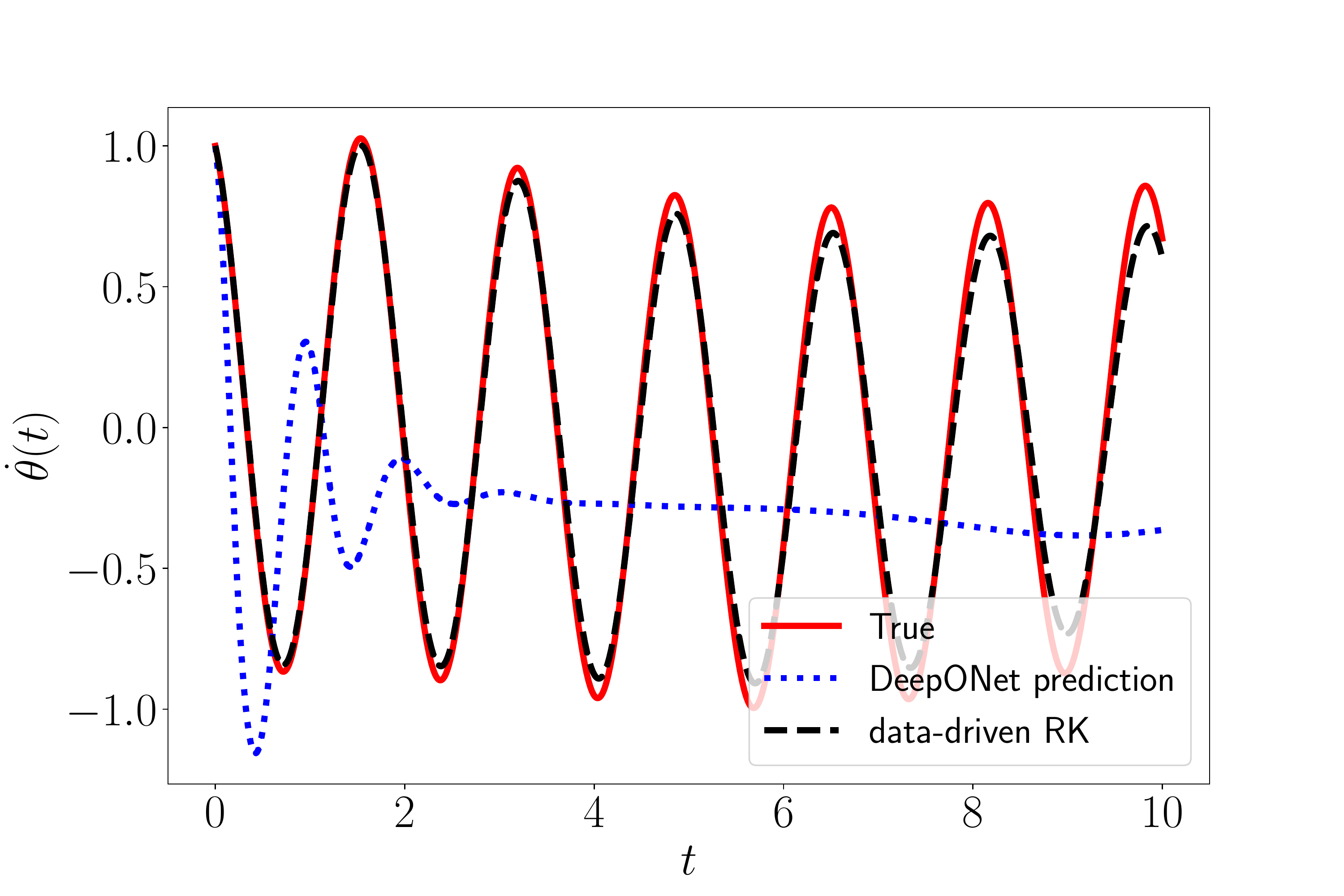}
\end{subfigure}
\caption{Comparison of DeepONet prediction with the data-driven RK DeepONet prediction, and the actual trajectories of the pendulum~\eqref{eq:pendulum-swing-up} system's state~$x = (\theta(t), \dot{\theta}(t))^\top$ (\textit{left} is angle $\theta(t)$ and \textit{right} is velocity $\dot{\theta}(t)$) response to the input signal~$u(t) = \sin(t/2)$ within the partition $\mathcal{P}' \subset [0,10]$~(s) of constant step size $h = 0.00025$.}
\label{fig:pendulum-prediction-u2-hsmall}
\end{figure}

\textit{Comparison with other benchmarks.} We compare the proposed DeepONet numerical scheme (Algorithm~\ref{alg:prediction-scheme}) with other benchmarks. The authors are only aware of the work developed in~\cite{qin2021data}, which can approximate the dynamic response of continuous nonlinear non-autonomous systems over an arbitrary time partition $\mathcal{P} \subset [0,T]$ for arbitrary input functions. Below, we will compare our method with the approach in~\cite{qin2021data}, which we refer to as FNN.

Additionally, a related field that attempts to learn an environment model of a discrete, non-linear, non-autonomous system is model-based reinforcement learning (MBRL)~\cite{wang2019benchmarking}. However, since they can only approximate the response over a uniform partition, a comparison of traditional MBRL benchmarks with the proposed work is not possible. To address this problem, we transform the ensemble method~\cite{wang2019benchmarking}, a simple but effective MBRL method to learn the environment dynamics in MBRL, into a continuous approach using ideas from~\cite{qin2021data}. Thus, we also compare the proposed method with this continuous ensemble approach.

We begin by comparing the average and standard deviation of the $L_2$ relative error between the predicted response from DeepONet, FNN, and Ensemble models. These models were all trained using $N_{\text{train}}=10000$ one-step responses, and compared with the actual response trajectories of the pendulum system~\eqref{eq:pendulum-swing-up}. The pendulum trajectories are the responses to (i) the control torque $u_1(t) = -0.8 \dot{\theta(t)}$ and (ii) $100$ initial conditions uniformly sampled from the set $\mathcal{X}_o := \{\theta, \dot{\theta}:\theta \in [-\pi/2,\pi/2], \dot{\theta} = 0\}$. Table~\ref{table:pendulum-error-comparison} shows that DeepONet greatly improves generalization when compared with FNN and the continuous ensemble.
{\begin{table}[t]
\centering
\begin{tabular}{| c | c  c|}
\hline
& angle~$\theta(t)$ & angular velocity~$\dot{\theta}(t)$\\
\hline
\hline
mean $L_2$ (DeepONet) & 0.833 \% & 1.061 \%\\
st.dev. $L_2$ (DeepONet) & 0.604 \% & 0.756\% \\
\hline
mean $L_2$ (FNN) & 10.076 \% & 13.318 \%\\
st.dev. $L_2$ (FNN) & 7.626 \% & 9.817\% \\
\hline
mean $L_2$ (Ensemble) & 5.820 \% & 7.055 \%\\
st.dev. $L_2$ (Ensemble) & 3.817  \% & 4.654\% \\
\hline
\end{tabular}
\caption{The average and standard deviation (st.dev.) of the $L_2-$ relative error between the predicted (DeepONet, FNN, and Ensemble) and actual response trajectories of the pendulum system~\eqref{eq:pendulum-swing-up} to (i) the control torque~$u_1(t) = -0.8 \dot{\theta(t)}$ and (ii) $100$ initial conditions uniformly sampled from the set $\mathcal{X}_o := \{\theta, \dot{\theta}:\theta \in [-\pi/2,\pi/2], \dot{\theta} = 0\}$.}
\label{table:pendulum-error-comparison}
\end{table}}

In addition, we compared the predictions of DeepONet, FNN, and Ensemble with the actual trajectories of the pendulum~\eqref{eq:pendulum-swing-up} system's state~$x = (\theta(t), \dot{\theta}(t))^\top$. We analyzed the response to the input signal~$u(t) = \sin(t/2)$ within the partition $\mathcal{P} \subset [0,10]$~(s) with a constant step size of $h = 0.1$. Figure~\ref{fig:pendulum-prediction-u2-h0p1-comparison}  (the \textit{left} plot of Figure~\ref{fig:pendulum-prediction-u2-h0p1-comparison} shows the angle $\theta(t)$, and the \textit{right} plot shows the velocity $\dot{\theta}(t)$) illustrates that only DeepONet can accurately predict oscillatory behavior.

\begin{figure}[t!]
\centering
\begin{subfigure}[b]{0.49\textwidth}
\centering
\includegraphics[width=1.1\textwidth, height=6.0cm]{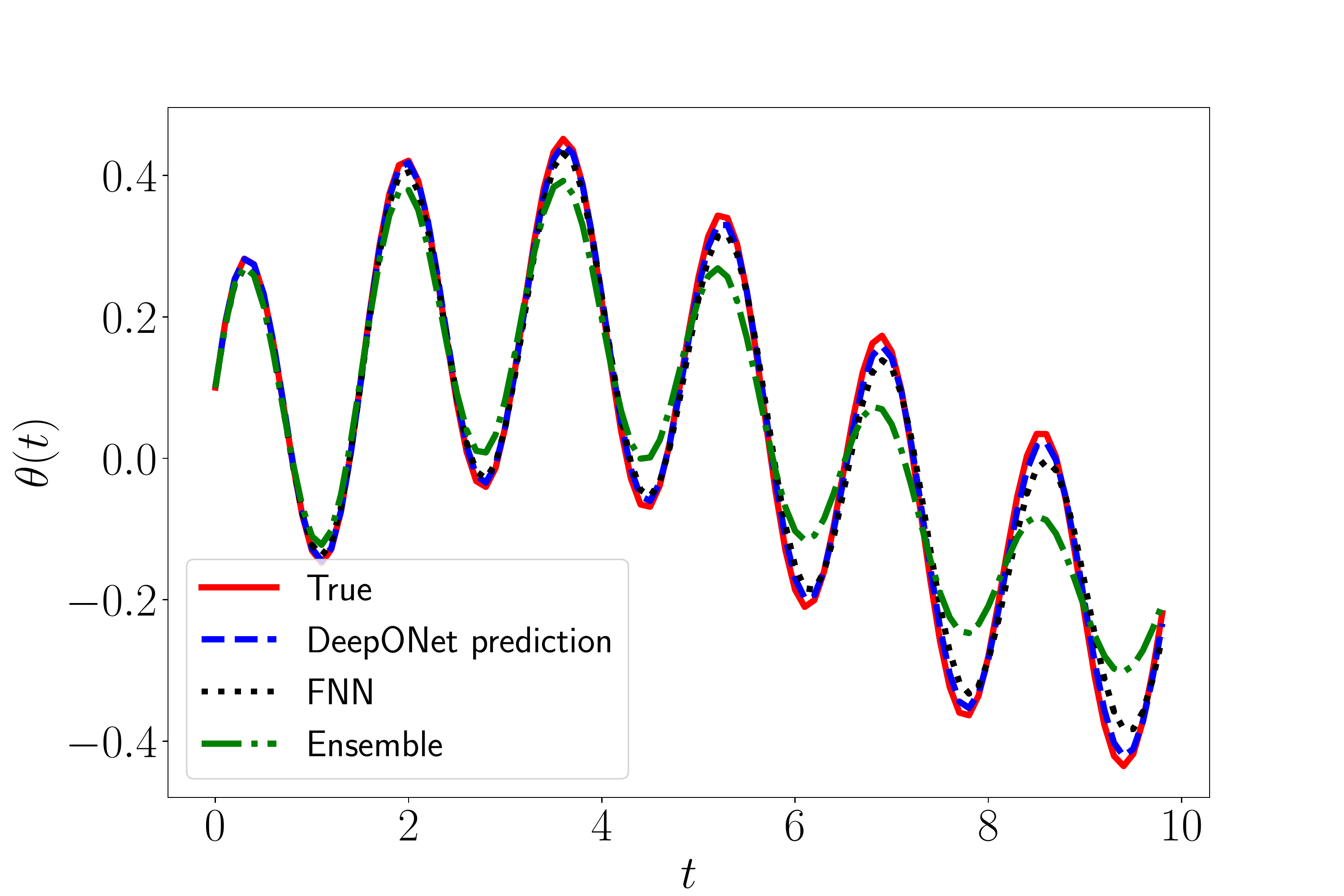}
\end{subfigure}
\begin{subfigure}[b]{0.49\textwidth}
\centering
\includegraphics[width=1.1\textwidth, height=6.0cm]{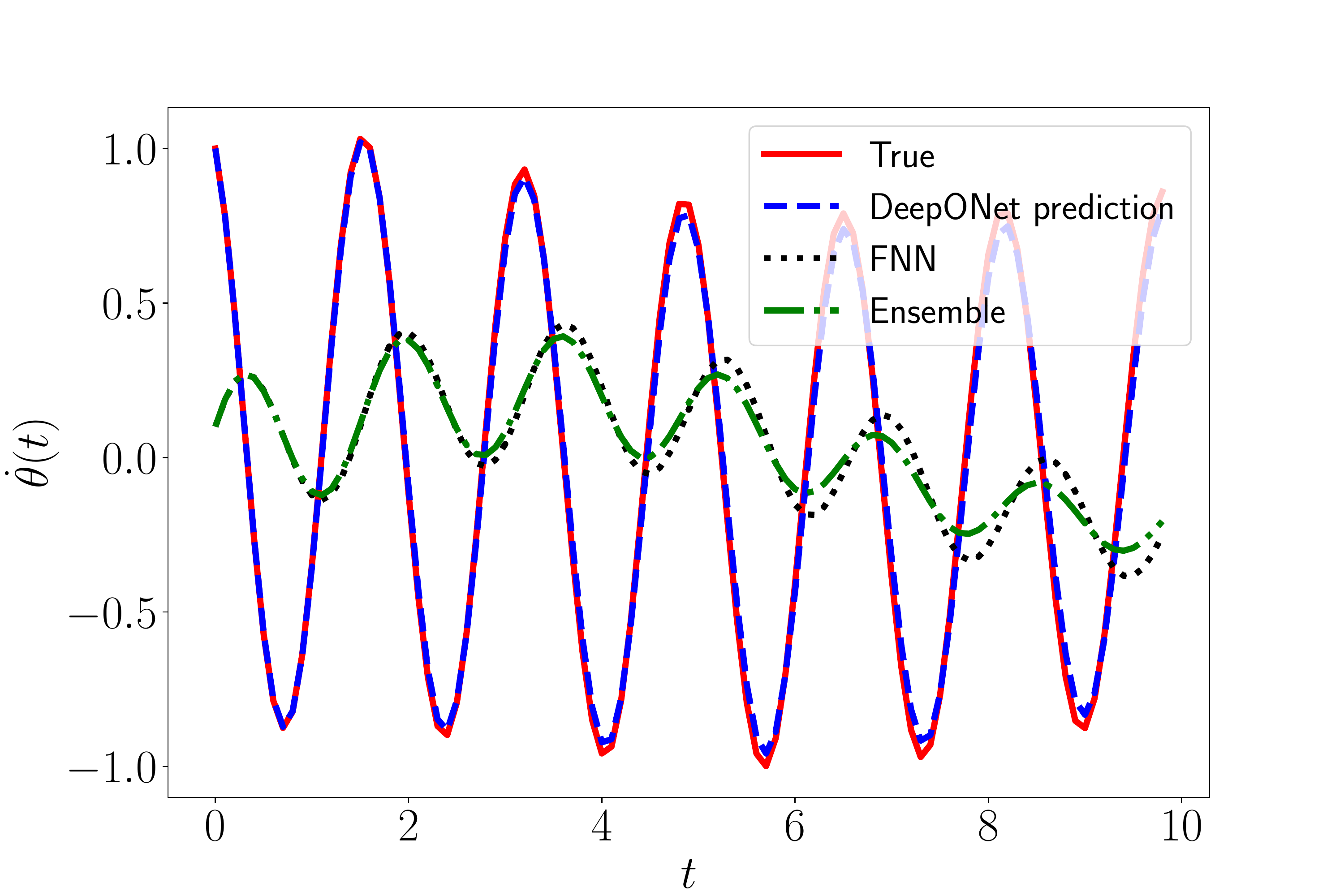}
\end{subfigure}
\caption{Comparison of DeepONet, FNN, and Ensemble prediction with the actual trajectories of the pendulum~\eqref{eq:pendulum-swing-up} system's state~$x = (\theta(t), \dot{\theta}(t))^\top$ (\textit{left} is angle $\theta(t)$ and \textit{right} is velocity $\dot{\theta}(t)$) response to the input signal~$u(t) = \sin(t/2)$ within the partition $\mathcal{P} \subset [0,10]$~(s) of constant step size $h = 0.1$.}
\label{fig:pendulum-prediction-u2-h0p1-comparison}
\end{figure}

\subsection{Cart-Pole} \label{ssec:cartpole}
In this section, we consider the following cart-pole system~\cite{florian2007correct} with control:
\begin{gather}
    \begin{aligned}
         \ddot{\theta}&= \frac{g \sin \theta + \cos \theta \left(\frac{-u(t) -m_p l \dot{\theta}^2 \sin \theta}{m_c+m_p}\right)}{l\left(\frac{4}{3} - \frac{m_p \cos^2 \theta}{m_c+m_p} \right) }\\
         \ddot{p}&= \frac{u(t) -b\dot{p} + m_pl(\dot{\theta}^2 \sin \theta - \ddot{\theta} \cos \theta)}{m_c + m_p}.
    \end{aligned}\label{eq:cartpole}
\end{gather}
In the above, the state of the cart-pole system is $x =(\theta, \dot{\theta}, p, \dot{p})^\top \in \mathcal{X}$, where $\theta$ is the angle of the pendulum, $\dot{\theta}$ the angular velocity of the pendulum, $p$ and $\dot{p}$ are, respectively, the position and the velocity of the cart. The input~$u \in \mathcal{U}$ is the horizontal force that makes the cart move to the left or the right. We selected the parameters of the cart-pole system as follows. The pendulum's length is $l = 0.5$~(m), the pendulum's mass is $m_p = 0.5$~(kg), the cart's mass is~$m_c = 0.5$~(kg), and the friction coefficient is~$b = 0.01$~(sNm/rad).

To train the DeepONet, we generated~$N_\text{train} = 20000$ trajectories with the initial condition~$x_i(t_n)$ (resp. input~$u_i(t_n)$) sampled from the state space $\mathcal{X}:=[-2\pi, 2\pi] \times [-\pi, \pi] \times [-2, 2] \times [-1,1]$ (resp. $\mathcal{U}:=[-5,5]$).

\begin{figure}[t!]
\centering
\begin{subfigure}[b]{0.49\textwidth}
\centering
\includegraphics[width=1.0\textwidth, height=5.25cm]{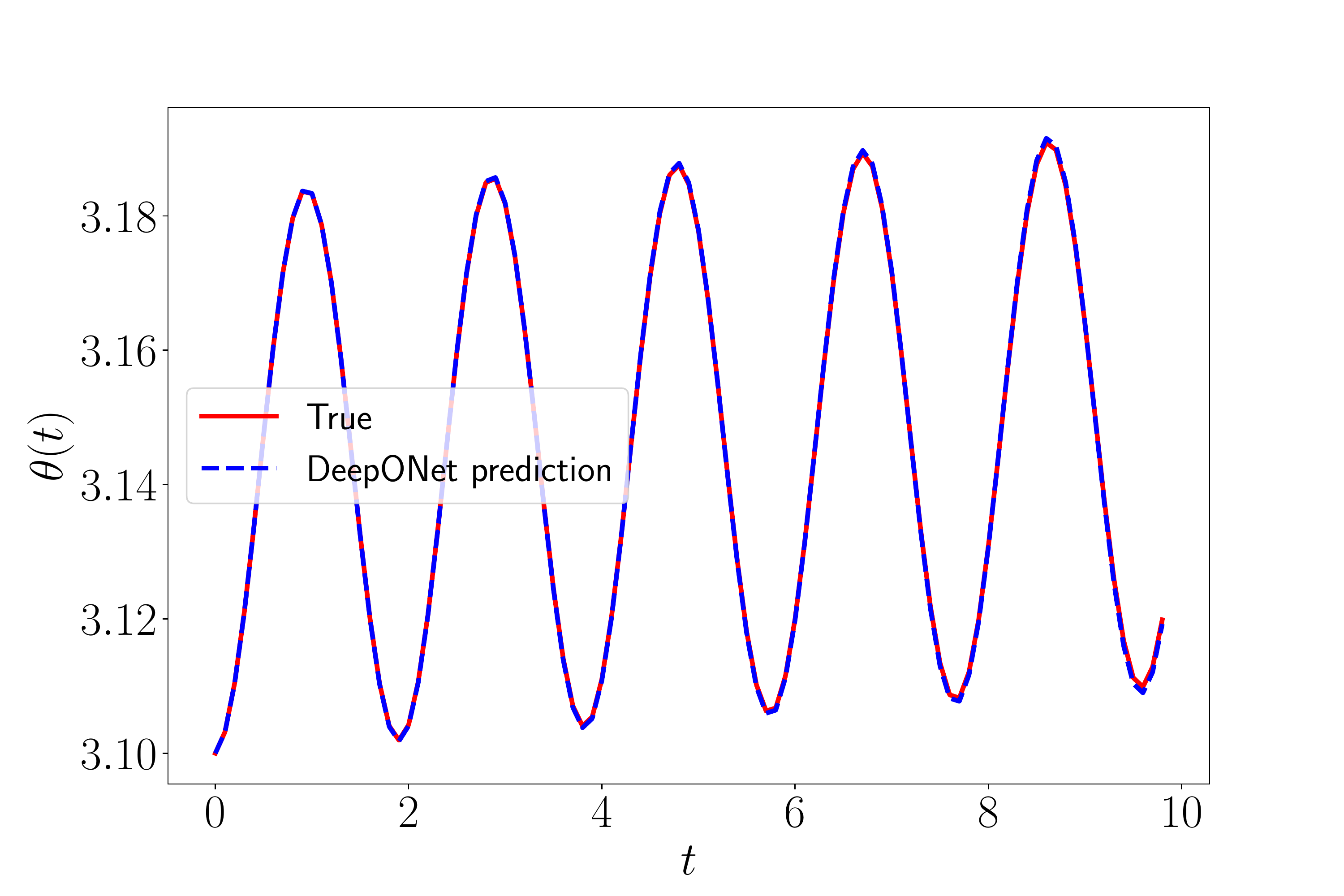}
\end{subfigure}
\begin{subfigure}[b]{0.49\textwidth}
\centering
\includegraphics[width=1.0\textwidth, height=5.25cm]{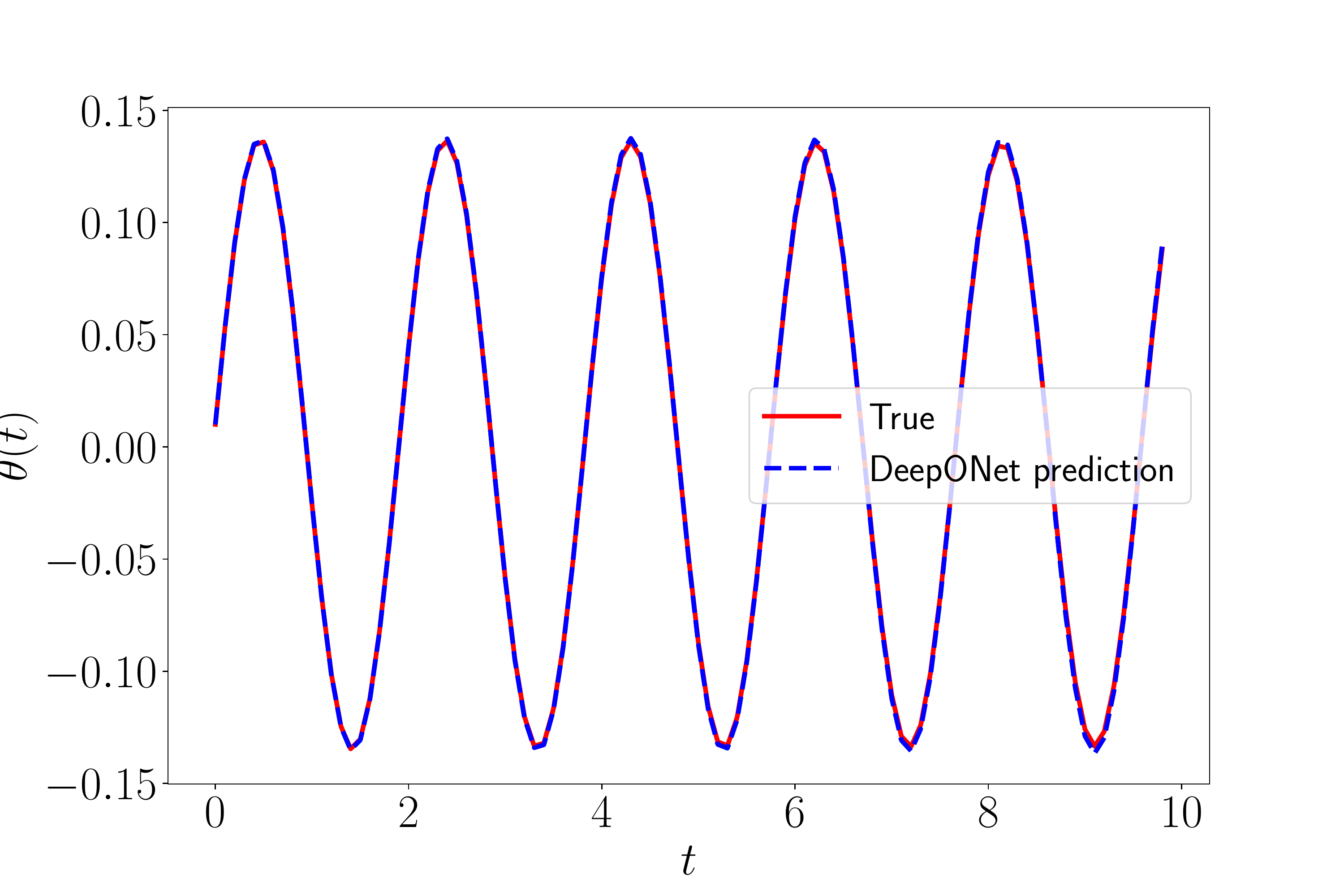}
\end{subfigure}
\\
\begin{subfigure}[b]{0.49\textwidth}
\centering
\includegraphics[width=1.0\textwidth, height=5.25cm]{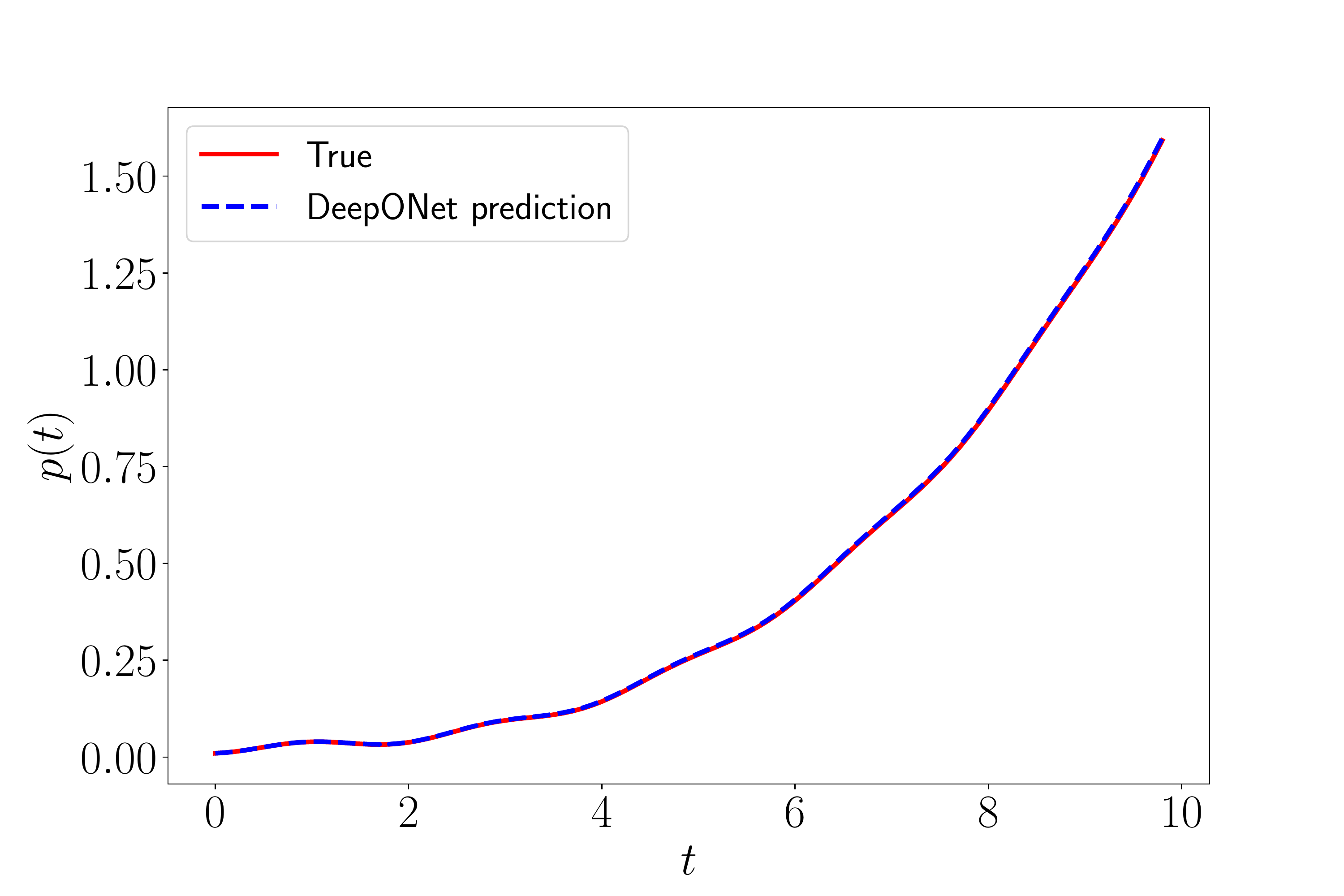}
\end{subfigure}
\begin{subfigure}[b]{0.49\textwidth}
\centering
\includegraphics[width=1.0\textwidth, height=5.25cm]{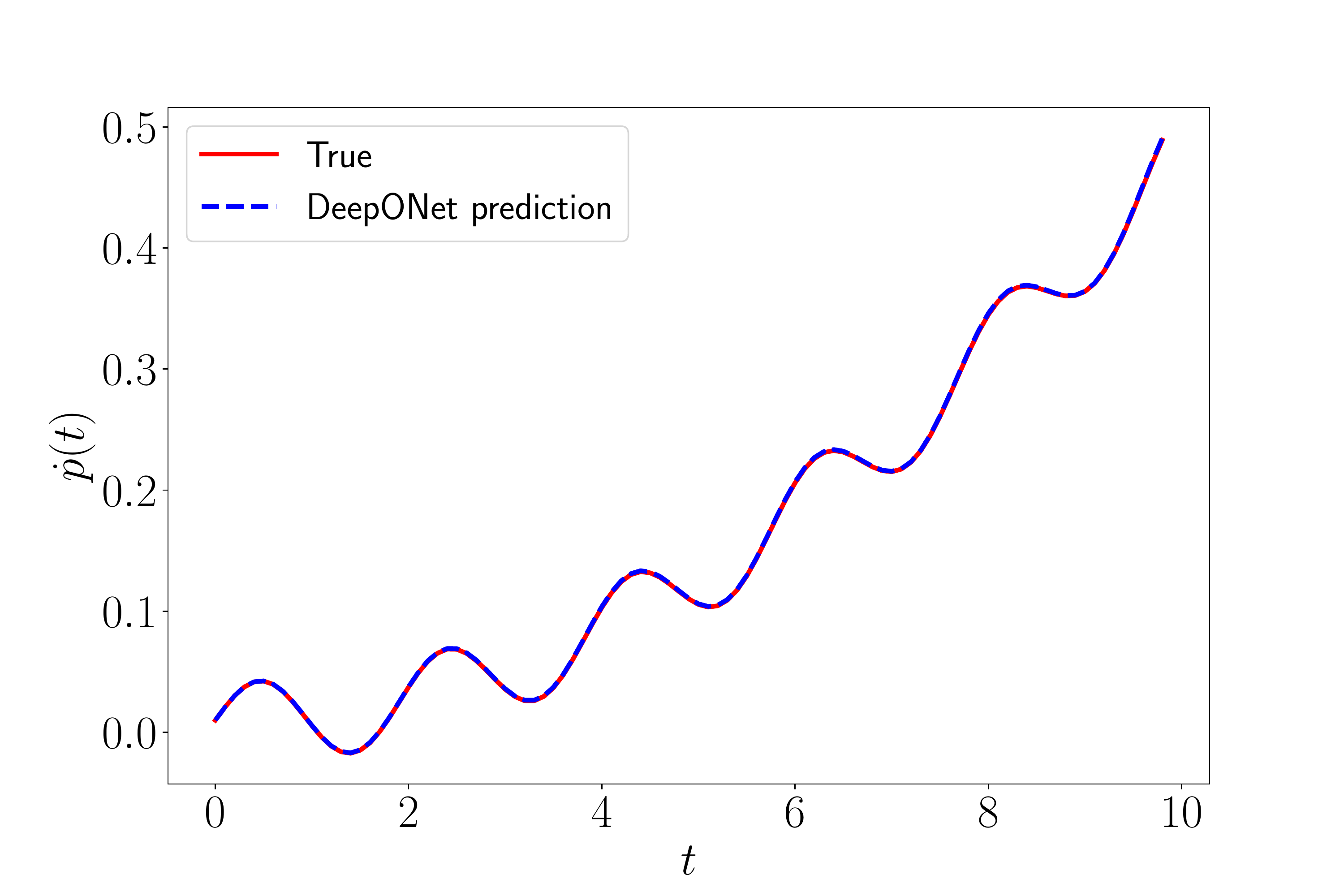}
\end{subfigure}
\caption{Comparison of DeepONet prediction with the actual trajectory of the cart-pole~\eqref{eq:cartpole} system's state~$x = (\theta(t), \dot{\theta}(t), p(t), \dot{p}(t))^\top$ (\textit{upper-left} is angle $\theta(t)$, \textit{upper-right} is angular velocity $\dot{\theta}(t)$, \textit{lower-left} is position $p(t)$, and \textit{lower-right} is velocity $\dot{p}(t)$) response to the input signal~$u(t) = t/100$ within the partition $\mathcal{P} \subset [0,10]$~(s) of constant step size $h = 0.1$.}
\label{fig:cart-pole-prediction}
\end{figure}

We used a trained DeepONet to predict the response of the cart-pole system to a time-dependent input signal $u(t) = t/100$ within the partition $\mathcal{P} \subset [0,10]$ (s) with a constant step size of $h = 0.1$ (\ie $t_{n+1} - t_n = h$) for all $t_n, t_{n+1} \in \mathcal{P}$. In Figure~\ref{fig:cart-pole-prediction}, we compare the predicted and true state trajectories. We observe a high degree of agreement between the predicted and true state trajectories, with $L_2$-relative errors of $0.008\%$, $1.028\%$, $0.478\%$, and $0.296\%$ for $\theta$, $\dot{\theta}$, $p$, and $\dot{p}$, respectively. Based on these results, we conclude that the proposed DeepONet framework is effective in predicting the response of a nonlinear control system, such as the cart-pole, to given inputs and different initial conditions.

\begin{table}[t]
\centering
\begin{tabular}{| c | c  c c c|}
\hline
& $\theta$ & $\dot{\theta}$ & $p$ & $\dot{p}$\\
\hline
\hline
$L_2$ relative error & $0.008$ \% & $1.028$\% & $0.478\%$ & $0.296\%$\\
\hline
\end{tabular}
\caption{Relative errors of the cart-pole example. The solutions are plotted in Figure \ref{fig:cart-pole-prediction}. }
\label{table:psu_os1_error}
\end{table}

\subsection{A Power Engineering Application} \label{ssec:power-eng}
The dynamics of the future power grid will fundamentally differ from today's grid due to widespread installation of energy storage systems, offshore wind power plants, and electric vehicle fast-charging sites. These devices will connect to the power grid via power electronic devices. This scenario differs greatly from the current paradigm where robust models of traditional power systems exist. Thus, simulating future power grids for planning, optimization, and control will require the interplay of legacy simulators and models of power electronics-based devices (\eg wind turbines) that can be learned from data.

In this final experiment, we aim to demonstrate that the proposed method can potentially interact with a traditional power engineering simulator, such as the Power System Toolbox. To achieve this, we begin with a simple application of recursive DeepONet that uses Algorithm~\ref{alg:prediction-scheme} to approximate the dynamic response of a simple second order model of a generator. We use data collected with the Power System Toolbox (PST)~\cite{chow2000power} of generator 1 within the classic two area system. Note that the response of the generator can be modeled as a non-autonomous system~\eqref{eq:control-system} by assuming the input function corresponds to the interface variables (stator currents) of the bus where the target generator connects.

\textit{Training data.} We generated training data $\mathcal{D}_{\text{PST}}$ by simulating $N_\text{exp}=600$ disturbance experiments on PST. Each experiment consisted of simulating the two area system system using a uniform partition $\mathcal{P} \subset [0.0,5.0]$ (s), with a constant step size of $h = 0.005$. The disturbances occur at $t_f=0.01$ (s) with a duration sampled from the interval $[0.005, 0.02]$ (s). Trajectory data was collected after each experiment, including the interacting input trajectories $\{u(t_n):t_n \in \mathcal{P}\}$ and state trajectory data $\{x(t_n):t_n \in \mathcal{P}\}$.

We constructed the training dataset by interpolating and sampling this trajectory data (\ie a time-series). In particular, we discretized the inputs using interpolation and $m=2$ sensors, \ie $\tilde{u}^n_m :=\{u(t_n+d_0), u(t_n + d_1))\}$, where $d_0=0.0$ and $d_1$ was uniformly sampled from the open interval $(0,h)$. The final training dataset of size $N_\text{train}=30000$ is:
$\mathcal{D}_\text{PST} = \{(x_i(t_n),\tilde{u}^n_{m,i}), \{0,d_{m,i}\}, h_i, x_i(t_n+h_i)\}_{i=1}^{N_\text{train}}.$

We tested the proposed DeepONet using a PST test trajectory that was not included in the training dataset and which experienced a disturbance of duration 0.01 (s). For these test trajectory, we used an uniform partition $\mathcal{P}$ of size $100$. As shown in Figure~\ref{fig:pst-response}, DeepONet-based Algorithm~\ref{alg:prediction-scheme} accurately predicted the dynamic response of generator 1 in the classic two area system.

\begin{figure}[t!]
\centering
\begin{subfigure}[b]{0.49\textwidth}
\centering
\includegraphics[width=1.0\textwidth, height=6.0cm]{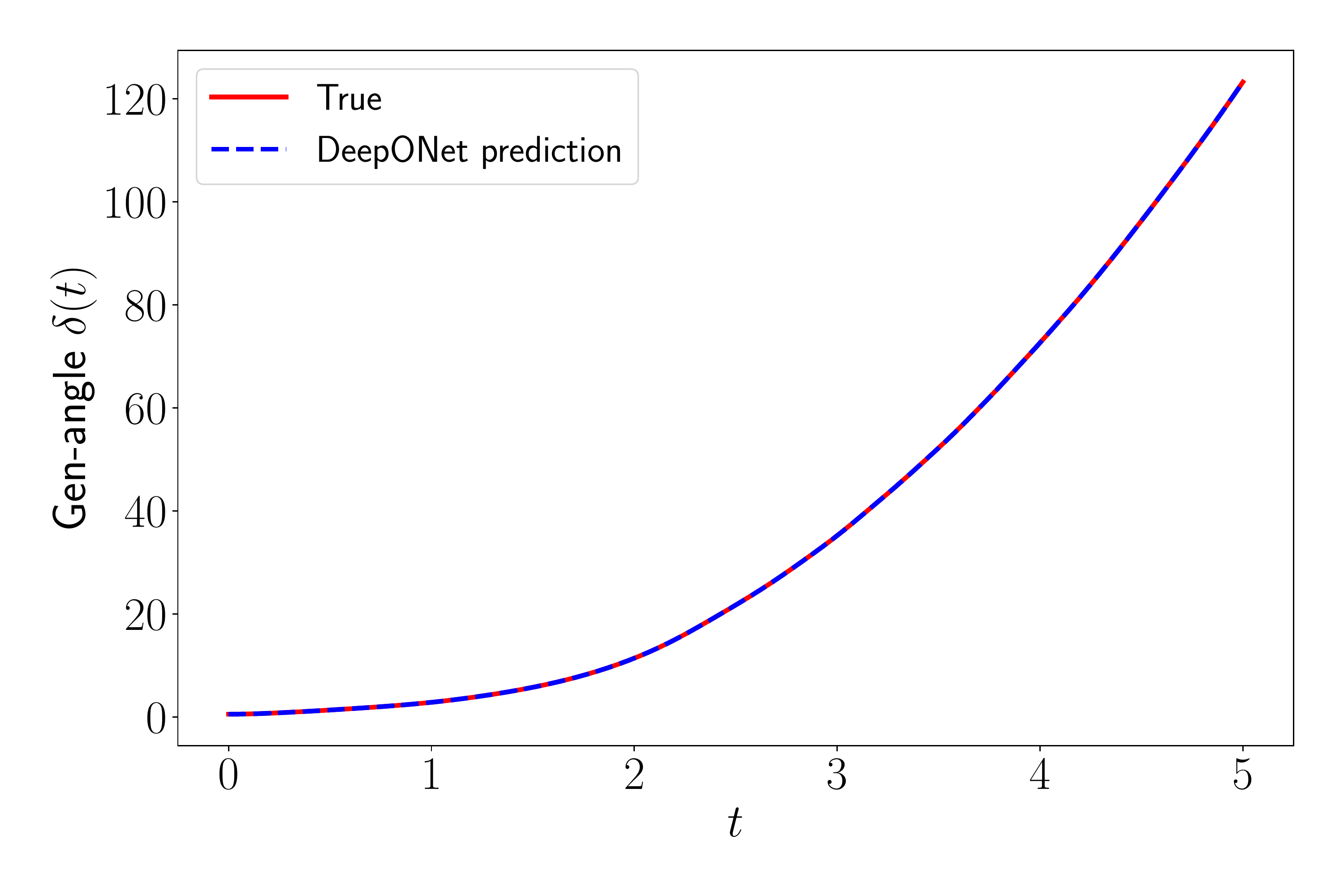}
\end{subfigure}
\begin{subfigure}[b]{0.49\textwidth}
\centering
\includegraphics[width=1.0\textwidth, height=6.0cm]{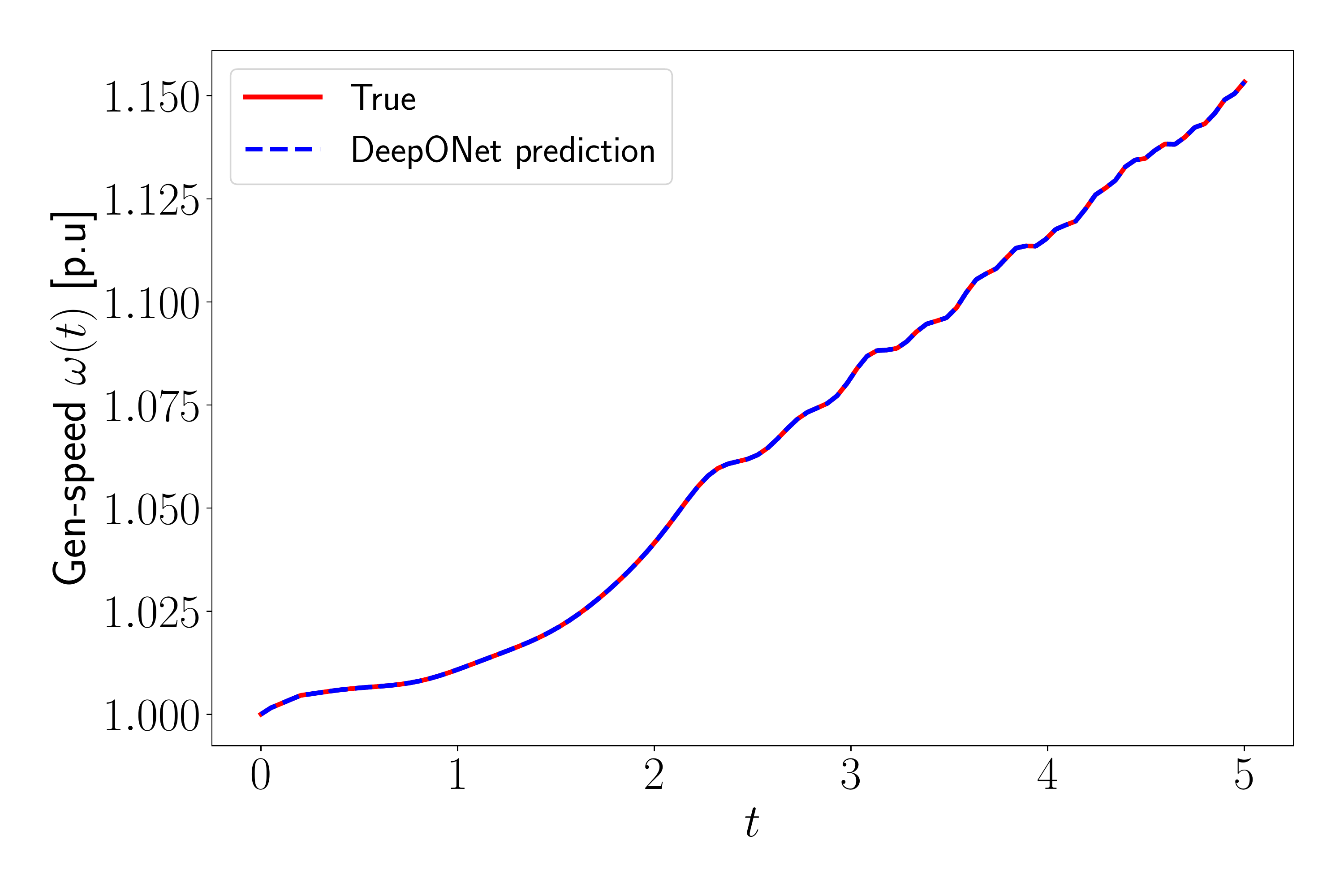}
\end{subfigure}
\caption{Comparison of DeepONet prediction with the actual trajectory of a second-order electromagnetic generator model state~$x = (\delta(t), \omega(t))^\top$ (\textit{left} is $\delta(t)$ angle and \textit{right} is $\omega(t)$ speed) for a disturbance of duration $0.01$ (s) within a uniform partition $\mathcal{P} \subset [0,5]$~(s) size $100$.}
\label{fig:pst-response}
\end{figure}

In our future work, we plan to (i) extend this approach to allow for the full interaction of DeepONets and traditional numerical schemes, (ii) enable transfer learning for approximating more than one generator, and (iii) design learning protocols that can withstand error accumulation when predicting more complex and severe discontinuous disturbances.

\section{Discussion} \label{sec:discussion}
{This section discusses our main results and outlines our plans for future work.}

{We have demonstrated through all five tasks outlined in Section~\ref{sec:numerical-experiments} that the proposed recursive and data-driven RK DeepONets can effectively approximate the dynamic response of nonlinear, non-autonomous systems for varying inputs and initial conditions. Furthermore, in Section~\ref{ssec:pendulum}, we compared the proposed recursive method with two other benchmarks. The first is a neural network approach for approximating non-autonomous systems developed in~\cite{qin2021data}. The second is an ensemble method~\cite{wang2019benchmarking} that we extended to the continuous scenario for comparison purposes. Table~\ref{table:pendulum-error-comparison} shows that the proposed recursive DeepONet outperforms the other two benchmarks when the training dataset size is small. In particular, the recursive method keeps the mean $L_2$-relative error for 100 test trajectories below $1.1$~\%. Additionally, Figure~\ref{fig:pendulum-prediction-u2-h0p1-comparison} demonstrates that among all benchmarks, the proposed recursive DeepONet is the only one that can effectively approximate challenging oscillatory solution trajectories. In conclusion, when the dataset is small, which often occurs in engineering systems, the proposed recursive DeepONet provides the best method for approximating the dynamic response of non-autonomous systems.}

{In \textit{our future work}, we aim to extend the proposed framework to include (i) reduced-order, (ii) stochastic, and (iii) networked non-autonomous and control systems. We also plan to apply the DeepONet framework to model-based reinforcement learning and control design. Specifically, we aim to use it for learning semi-Markov decision processes, which can be applied to learn suboptimal offline policies. Moreover, we aim to apply the recursive and data-driven RK DeepONets to complex engineering applications such as fluid dynamics, materials engineering, and future power systems.}
\section{Conclusion} \label{sec:conclusion}
We introduced a Deep Operator Network (DeepONet) framework that can learn (from data) the dynamic response of nonlinear, non-autonomous systems with time-dependent inputs for long-term horizons. The proposed framework approximates the system's solution operator locally using the DeepONet and then recursively predicts the system's response for long/medium-term horizons using the trained network. We also estimated the error bound for this DeepONet-based numerical scheme. To improve the predictive accuracy of the scheme when the step size is small, we designed and theoretically validated a data-driven Runge-Kutta DeepONet scheme. This scheme uses estimates of the vector field computed with the DeepONet forward pass and automatic differentiation. Finally, we validated the proposed framework using an autonomous and chaotic system, three continuous control tasks, and a power engineering application.

\section*{Acknowledgments}
We gratefully acknowledge the support of the National Science Foundation (DMS-1555072, DMS-2053746, and DMS-2134209), Brookhaven National Laboratory Subcontract 382247, and U.S. Department of Energy (DOE) Office of Science Advanced Scientific Computing Research program DE-SC0021142 and DE-SC0023161.

\section*{Declaration of Competing Interest}
The authors declare that they have no known competing financial interests or personal relationships that could have appeared to influence the work reported in this paper.

\bibliographystyle{elsarticle-harv} 
\bibliography{references}

\end{document}